\newtheorem{theorem}{Theorem}
\newtheorem{lemma}[theorem]{Lemma}
\newcommand{\E}{\mathbb E}
\newcommand{\N}{\mathbb N}
\begin{document}
\title{$\gamma$-variable first-order logic of uniform attachment random graphs\tnoteref{t1}}
\tnotetext[t1]{Maksim Zhukovskii is supported by the Ministry of Science and Higher Education of the Russian Federation in the framework of MegaGrant no 075-15-2019-1926. The part of the study made by Y.A. Malyshkin was funded by RFBR, project number 19-31-60021.}
\author[1]{Y.A. Malyshkin}
\ead{yury.malyshkin@mail.ru}
\author[2]{M.E. Zhukovskii}
\ead{zhukmax@gmail.com}


\address[1]{Moscow Institute of Physics and Technology//Tver State University}
\address[2]{Moscow Institute of Physics and Technology}

\begin{abstract}
We study logical limit laws for uniform attachment random graphs. In this random graph model, vertices and edges are introduced recursively: at time $n+1$, the vertex $n+1$ is introduced together with $m$ edges joining the new vertex with $m$ different vertices chosen uniformly at random from $1,\ldots,n$. We prove that this random graph obeys convergence law for first-order sentences with at most $m-2$ variables.
\end{abstract}

\begin{keyword}
uniform attachment; convergence law; first-order logic
\end{keyword}

\maketitle

\section{Introduction}

The well-known first-order (FO) zero-one law for finite models~\cite{Fagin,Glebskii} states that, for every FO sentence $\phi$, a $\sigma$-structure (a vocabulary $\sigma$ is given) with the universe $[n]:=\{1,\ldots,n\}$ chosen uniformly at random satisfies $\phi$ with an asymptotical probability either 0 or 1, $n\to\infty$. For graphs, this law can be reformulated in the following way. For every FO sentence $\varphi$ over graphs (where $\sigma$ consists of the relations of equality $=$ and adjacency $\sim$ of vertices in the graph), the  probability $\Pr(G(n,1/2)\models\varphi)$ that the binomial random graph $G(n,1/2)$~\cite{Janson,Survey} satisfies $\varphi$ converges to either 0 or 1, $n\to\infty$. For other constant edge probability functions $p$, it is known that $G(n,p)$ obeys FO zero-one law as well~\cite{Spencer_Ehren}. However, for $p=n^{-\alpha}$, the situation changes~\cite{Shelah}: zero-one law holds if and only if $\alpha$ is either irrational or bigger than 1 and does not equal $1+1/\ell$ for $\ell\in\mathbb{N}$. Moreover, when $\alpha\in(0,1)$ is rational, even the FO convergence law fails: there are FO sentences $\varphi$ such that  $\Pr(G(n,n^{-\alpha})\models\varphi)$ do not converge as $n\to\infty$, \cite{Shelah}. In this paper, we study FO convergence laws on uniform attachment random graph models.\\

Let us recall that FO sentences about graphs comprise the following symbols: variables $x,y,x_1,\ldots$ (which represent vertices), logical connectives $\wedge,\vee,\neg,\Rightarrow,$ $\Leftrightarrow$, two relational symbols (between variables) $\sim$ (adjacency) and $=$ (equality), brackets and quantifiers $\exists,\forall$  (see the formal definition in, e.g.,~\cite{Libkin,Survey,Strange}). For example, the sentence
$$
\forall x \forall y\quad [\neg(x=y)\wedge\neg(x\sim y)]\Rightarrow[\exists z\,\,(x\sim y)\wedge(z\sim y)] 
$$
expresses the property of having diameter at most 2. Following standard notations of model theory, we write $G\models\varphi$ when FO sentence $\varphi$ is true on graph $G$.

Let $\mathcal{G}_n$ be a random graph on the vertex set $[n]$ with a random set of edges. For a graph property $Q$, we say that it {\it a.a.s.} (asymptotically almost surely) holds for  $\mathcal{G}_n$, if ${\sf P}(\mathcal{G}_n\in Q)\to 1$ as $n\to\infty$. The sequence of random graphs $\{\mathcal{G}_n\}_{n\in\mathbb{N}}$ {\it obeys FO zero-one law}, if, for every FO sentence $\varphi$, $\lim_{n\to\infty}\Pr(\mathcal{G}_n\models\varphi)\in\{0,1\}$ (in other words, a.a.s. $G_n\models\varphi$). It {\it obeys FO convergence law}, if, for every FO sentence $\varphi$, $\Pr(\mathcal{G}_n\models\varphi)$ converges as $n\to\infty$.

Many random graph models are well-studied in the context of logical limit laws. FO zero-one laws and convergence laws were established for the binomial random graph (\cite{Spencer_Ehren,Shelah}), random regular graphs (\cite{Haber}), random geometric graphs (\cite{geo}), uniform random trees (\cite{McColm}), and many others (see, e.g., \cite{Muller,Strange,Zhuk_Svesh,Winkler}).
However, for recursive random graph models, the only attempt to prove logical laws was made by R.D. Kleinberg and J.M. Kleinberg~\cite{Kleinberg}. In that paper, it was noticed that  the preferential attachment random graph with parameter $m$ (the number of edges that appear at every step) does not obey FO zero-one law when $m\geq 3$. In our recent paper~\cite{MZ20}, we proved that, if $m=1$, then both the preferential attachment random graph and the uniform attachment random graph obey FO zero-one law. 

Let us recall that {\it the uniform attachment random graph}~\cite{recursive,dags,Magner,Mahmoud} is generated in the following way. We initially begin with a complete graph on $m$ vertices (so $G_{m,m}\cong K_m$). Graph $G_{n+1,m}$ is built from $G_{n,m}$ by adding the new vertex $n+1$ and drawing $m$ edges from it to different vertices of $G_{n,m}$ chosen uniformly at random. In particular, it means that, for a given vertex $v\leq n$, the probability of adding an edge to it at step $n+1$ is exactly $\frac{m}{n}$. In~\cite{MZ20}, we showed that, for $m\geq 2$, $G_{n,m}$ does not obey FO zero-one law. However, the question about validity of the FO convergence law is still open. In this paper, we prove that the FO convergence law holds for sentences with at most $m-2$ variables.\\

We say that $\{\mathcal{G}_n\}_{n\in\mathbb{N}}$ obeys $\mathrm{FO}^{\gamma}$ {\it convergence law} if, for every FO sentence $\varphi$ with at most $\gamma$ different variables, $\Pr(\mathcal{G}_n\models\varphi)$ converges as $n\to\infty$. In this paper, we prove the following.

\begin{theorem}
\label{m-law}
$\{G_{n,m}\}_{n\in\mathbb{N}}$ obeys $\mathrm{FO}^{m-2}$ convergence law.
\end{theorem}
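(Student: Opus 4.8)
\medskip

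\noindent My plan is to argue through Ehrenfeucht--Fra\"{\i}ss\'{e} (EF) games with a bounded number of pebbles. Recall that two graphs agree on all first-order sentences with at most $\gamma$ variables and quantifier depth at most $r$ exactly when Duplicator has a winning strategy in the $r$-round $\gamma$-pebble EF game on the pair, and that for fixed $\gamma,r$ there are only finitely many such sentences up to logical equivalence. Hence it suffices to fix $r$ and to show that the distribution of the equivalence class of $G_{n,m}$ under ``agreement on $\mathrm{FO}^{m-2}$ sentences of quantifier depth $\le r$'' converges as $n\to\infty$.

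The structural input is that no edge with both endpoints in $[K]:=\{1,\dots,K\}$ is created after time $K$, so $G_{n,m}[[K]]$ is distributed as $G_{K,m}$ for every $n\ge K$ --- a distribution not depending on $n$. I would fix a large constant $K=K(m,r)$, call $G_{n,m}[[K]]$ the \emph{core}, and condition on its isomorphism type $\tau$. The crux would then be: conditionally on the cores of two independent copies $G_{n,m}$ and $G_{N,m}$ being isomorphic (to $\tau$), asymptotically almost surely Duplicator wins the $r$-round $(m-2)$-pebble game on $(G_{n,m},G_{N,m})$. Granting this, and writing $p_n(\tau):=\Pr(G_{n,m}\models\varphi\mid \mathrm{core}\cong\tau)$, the product structure of the conditional law forces $p_n(\tau)$ to converge --- necessarily to $0$ or $1$ --- and, since $\Pr(\mathrm{core}\cong\tau)$ is independent of $n$, we get $\Pr(G_{n,m}\models\varphi)=\sum_\tau\Pr(\mathrm{core}\cong\tau)\,p_n(\tau)\to\sum_\tau\Pr(\mathrm{core}\cong\tau)\lim_n p_n(\tau)$, the desired convergence.

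For Duplicator's strategy I would establish, a.a.s. conditionally on the core, three facts. First, \emph{localisation of the random finite part}: taking $K$ large enough (and, if needed, growing slowly with $n$), every connected subgraph on at most $m-2$ vertices that occurs only $o(n)$ times in $G_{n,m}$ in fact lies inside the core --- this follows from first-moment estimates, the relevant expectations being sums $\sum_v v^{-c}$ with $c\ge 2$, which are $\Theta(K^{-(c-1)})\to 0$ over the range $v>K$. Second, \emph{genericity of the periphery}: every connected configuration on at most $m-2$ vertices that is consistent with $\tau$ and not of the forbidden type just described occurs $\Theta(n)$ times, hence more often than the bounded number of witnesses that $m-2$ variables can distinguish, so to any $\mathrm{FO}^{m-2}$ sentence of depth $r$ it reads simply as ``present''. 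Third, \emph{the pebble budget is too small}: every vertex outside the core has at least $m$ neighbours, so with only $m-2$ pebbles Spoiler can neither occupy a separating set around a revealed peripheral vertex nor pin down all of its neighbours --- even a peripheral vertex together with all of its (at most $m$) neighbours in the core would take $m+1>m-2$ pebbles --- so at least two of its neighbours remain free and Duplicator always has room to answer inside a matching local configuration on the opposite side. Duplicator then maintains a \emph{safe} position: core pebbles matched by a fixed isomorphism of the two cores; peripheral pebbles matched so that the balls around them of a radius shrinking geometrically with the round number (from about $3^{\,r}$), together with the way those balls hang on the core, are isomorphic; and the three facts above let this be kept up for all $r$ rounds.

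I expect the main obstacle to be the third fact together with making the first exhaustive: one has to be sure that \emph{every} bounded structure that an $\mathrm{FO}^{m-2}$ sentence of depth $r$ can speak about --- including configurations ``anchored'' to core vertices or to other rare substructures --- is either pinned down by conditioning on the core or occurs unboundedly often, so that nothing of bounded-but-positive frequency leaks out of the conditioned fragment; and then one must exploit the exact value $m-2$ to show that a budget too small to surround or fully resolve a degree-$\ge m$ vertex genuinely prevents Spoiler from reading off anything beyond the core type and the local-plus-generic data above. Equivalently, the heart of the matter is that $m-2$ variables are blind to all the genuinely random finite structure --- small dense subgraphs of order about $m$ and their relative positions --- that lives away from the conditioned fragment. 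The moment computations behind the first two facts, though lengthy, are routine; the care goes into the game-theoretic bookkeeping tying together the shrinking radii, the core, and the pebble count.
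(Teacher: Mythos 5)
Your proposal takes essentially the same route as the paper: reduce to $(m-2)$-pebble Ehrenfeucht--Fra\"{\i}ss\'{e} games, couple (or condition) on a fixed initial segment of the process whose induced subgraph is distributed as $G_{K,m}$ independently of $n$, show that a.a.s.\ the local structure outside this segment is benign, and exploit the mismatch between the $m-2$ pebbles and the minimum degree $\geq m$ so that Duplicator can always escape into fresh periphery. The paper carries this out with a fixed pair of constants $n_0\subset N_0$ playing the role of your core, properties ${\sf Q1},{\sf Q2},{\sf Q3}$ playing the role of your three probabilistic facts, and a long inductive game lemma (Lemma~\ref{claim_m-game}) realising your ``safe position'' invariant with radii $2^{R-r}$.

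There is, however, a genuine gap in the probabilistic half of your sketch. Your dichotomy is: a connected configuration on $\leq m-2$ vertices either has summable expectation (excess $\geq 2$, so $\sum v^{-c}$ with $c\geq 2$) and is therefore confined to the core, or it occurs $\Theta(n)$ times and is ``generically present''. Small \emph{cycles} fall into neither bucket. A $k$-cycle has excess $1$, its expected count behaves like $\sum v^{-1}$, and in fact it occurs only polylogarithmically often (Lemma~\ref{lem:number_of_cycles}): $\omega(1)$ but far from $\Theta(n)$. So your genericity claim is simply false for $C_k$, and your localisation claim does not cover it either. This middle regime is precisely where the paper does its work: property ${\sf Q1}$ guarantees that all cycles outside $[N_0]$ are pairwise far apart and far from $[n_0]$, and property ${\sf Q2}$ guarantees at least $m$ copies of each short cycle outside $[N_0]$ --- not $\Theta(n)$, but just enough to beat $m-2$ pebbles when combined with the separation in ${\sf Q1}$. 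A related refinement you have omitted: to read ``present'' one cannot count raw copies; one needs copies that are far from each other, from the core, and from all currently pebbled vertices, which is what ${\sf Q1}$ provides and what your ``$\Theta(n)$, hence more than the bounded number of witnesses'' shortcut silently assumes. Finally, the claim that $p_n(\tau)\to\{0,1\}$ (a conditional zero--one law) is a correct consequence of the paper's Lemma~\ref{claim_m-game} but is strictly stronger than what the convergence-law contradiction requires; the paper instead runs a direct subsequence argument.
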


{\it Remark}. Let us also recall that the {\it quantifier depth} of a FO sentence $\varphi$ is, roughly speaking, the maximum length of a sequence of nested quantifiers in $\varphi$ (see the formal definition in \cite[Definition 3.8]{Libkin}). It is straightforward that any FO sentence with quantifier depth $q$ has a tautologically equivalent FO sentence with at most $q$ variables. Therefore, Theorem~\ref{m-law} implies the validity of the convergence law for FO sentences with quantifier depth at most $m-2$. However, it is not hard to see that the same proof works even for FO sentences with quantifier depth at most $m-1$.\\

Let us notice that the study of the fragment $\mathrm{FO}^{\gamma}$ of the FO logic in the context of limit laws is in full accordance with the finite model theory since proving or disproving logical limit laws leads to better understanding of the hierarchy of these fragments which, in turn, is strongly related to estimation of time complexity of decision problems formulated in the respective logics~(see \cite[Chapter 6]{Libkin}).\\

The paper is organized as follows. In Section~\ref{proof}, we prove Theorem~\ref{m-law}. The proof is based on two auxiliary statements. The first one describes some local properties of the random graph and is proven in Section~\ref{sec:prob}. The second one claims that, in the $\gamma$-pebble Ehrenfeucht-Fra\"{\i}ss\'{e} game on two graphs $G_1\subset G_2$ (i.e., $G_1$ is a subgraph of $G_2$) with the local properties described in the first statement, Duplicator wins. The proof of the second statement is given in Section~\ref{proof_claim_m-game}. In Section~\ref{conj}, we conjecture that $\{G_{n,m}\}_{n\in\mathbb{N}}$ obeys FO convergence law and describe a possible approach to prove that.

\section{Proof of Theorem~\ref{m-law}}
\label{proof}
One of the main tools to prove FO logical limit laws is the Ehrenfeucht-Fra\"{\i}ss\'{e} pebble game (see, e.g., \cite[Chapter 11.2]{Libkin}). Let us recall the rules of the game.

The {\it $\gamma$-pebble game} is played on two graphs $G$ and $H$ with $\gamma$ pebbles assigned to each of them (say, $g_1,\ldots,g_{\gamma}$ and $h_1,\ldots,h_{\gamma}$). There are two players, {\it Spoiler} and {\it Duplicator}. In each round, Spoiler moves a pebble to a vertex either in $G$ or in $H$; then Duplicator must move the pebble with the same subscript to a vertex in the other graph (hereinafter, we refer to a vertex containing a pebble as {\it pebbled vertex}). Let $x^i_1,\ldots,x^i_{\gamma}\in V(G)$ and $y^i_1,\ldots,y^i_{\gamma}\in V(H)$ denote the pebbled vertices in the $i$-th round. Given $R\in\N$, if for every $i\leq R$, and for all $j,k\in\{1,...,\gamma\}$ we have $x_{j}^{i}=x_{k}^{i}$ iff $y_j^i=y_k^i$ and $x_{j}^i\sim x_{k}^i$ iff $y_{j}^i\sim y_{k}^i$, i.e. $G|_{\{x^i_1,\ldots,x^i_{\gamma}\}}$ is isomorphic to $H|_{\{y^i_1,\ldots,y^i_{\gamma}\}}$ (hereinafter, we denote by $G|_A$ the subgraph of $G$ induced on the set of vertices $A\subset V(G)$), then Duplicator wins the $\gamma$-pebble game of $R$ rounds.
Otherwise the winner is Spoiler. If Duplicator can play in a clever way that guarantees a winning position in the last round, no matter how Spoiler plays, then we say that Duplicator has {\it a winning strategy}.

The well-known relation between FO logics with a finite number of variables and pebble games is as follow (see, e.g. \cite[Theorem 11.5]{Libkin}).

\begin{theorem}
Duplicator has a winning strategy for the $\gamma$-pebble game on $G$ and $H$ in $R$ rounds if and only if, for every FO sentence $\varphi$ with at most $\gamma$ variables and quantifier depth at most $R$, either $\varphi$ is true on both $G$ and $H$ or it is false on both graphs.
\label{Ehren}
\end{theorem}

Note that the existence of the winning strategy for Duplicator always follows from a `local structure' of graphs. In other words, for every $R\in\mathbb{N}$, there exists $a(R)$ such that, in order to verify a given property expressed by a FO sentence of quantifier depth at most $R$, it is sufficient to know $a(R)$-neighborhoods of all vertices.
So, we start from describing `sufficient' local properties of the random graph sequence $\{G_{n,m}\}_{n\in\mathbb{N}}$.

\begin{lemma} 
\label{lem:prob_main}

Let $a\in\mathbb{N}$. 

\begin{enumerate}

\item For every   $\varepsilon>0$, there exist $n_0=n_0(\varepsilon)\in\mathbb{N}$, $N_0=N_0(\varepsilon)\in\mathbb{N}$ such that, for every $n\geq n_0$, with probability at least $1-\varepsilon$, 

\vspace{0.1cm}

--- for every cycle of $G_{n,m}$ with at most $a$ vertices, it either has all vertices inside $[N_0]$, and any path connecting $[n_0]$ with this cycle and having length at most $a$ has all vertices inside $[N_0]$, or is at distance at least $a$ from $[n_0]$; 

\vspace{0.1cm}

--- every path with at most $a$ vertices joining two vertices of $[n_0]$ has all vertices inside $[N_0]$;

\vspace{0.1cm}

--- any two cycles with all vertices in $[n]\setminus[n_0]$ and having at most $a$ vertices are at distance at least $a$ from each other.

\vspace{0.1cm}



\item For every $N_0\in\mathbb{N}$ and $K\in\mathbb{N}$, a.a.s., for every $b\leq a$, there are at least $K$ distinct copies of $C_b$ (as usual, $C_b$ stands for a cycle of length $b$) in $G_{n,m}$ with all vertices in $[n]\setminus[N_0]$.


\item For every $N_0\in\mathbb{N}$ and  $K\in\mathbb{N}$, a.a.s. every vertex of $[N_0]$ has degree at least $K$ in $G_{n,m}$.

\end{enumerate}

\end{lemma}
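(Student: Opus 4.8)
The plan is to treat the three parts separately, in increasing order of difficulty. \emph{Part 3} is a routine concentration estimate: the degree of a fixed vertex $v\le N_0$ in $G_{n,m}$ is at least $\sum_{t=v}^{n-1}\mathbf 1[v\sim t+1]$, a sum of \emph{independent} Bernoulli variables with means $m/t$ (independence holds because the $m$ neighbours chosen at distinct steps come from independent uniform distributions), whose expectation is $\ge m\log(n/v)-O(1)\to\infty$; a Chernoff bound plus a union bound over the $N_0$ vertices of $[N_0]$ gives the claim. For \emph{Part 2}, fix $b\le a$ and look at the dyadic blocks $I_k=\{2^k+1,\dots,2^{k+1}\}$ with $2^k>N_0$. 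For a block of size $s=2^k$ one computes $\E[\#\{C_b\subseteq G_{n,m}:V(C_b)\subseteq I_k\}]=\Theta_b(1)$ and, by a routine second-moment estimate, $\mathrm{Var}[\cdot]=O_b(1)$: since every vertex of a $b$-cycle has exactly two neighbours, a copy on a prescribed vertex set of $I_k$ has probability $\asymp s^{-b}$ while there are $\asymp s^{b}$ such sets. Hence $\Pr[I_k\text{ contains a }C_b\text{ on its own vertices}]\ge c_b>0$ for all large $k$ (Paley--Zygmund), and these events are \emph{independent across} $k$ because the presence of a $C_b$ inside $I_k$ depends only on the choices made at the (pairwise disjoint) steps lying in $I_k$. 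As the number of blocks inside $[n]$ tends to infinity, a.a.s.\ at least $K$ of them contain such a $C_b$, and these copies are distinct with all vertices $>N_0$; a union over $b\le a$ finishes Part 2.

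\emph{Part 1} is the delicate statement; the guiding idea is that any violation of any of the three bullets produces a small connected subgraph whose last-added vertex was forced, at the moment of its arrival, to send at least two of its $m$ edges into a prescribed "dangerous" set, and that this set is so small that this almost never happens. First I would fix $n_0$ large and introduce the dangerous set: let $D_t$ be the set of vertices of $G_{t,m}$ that lie within distance $O(a)$ either of $[n_0]$ or of some cycle of length $\le a$. Using that every $v\le n_0$ has degree $O(\log t)$ at time $t$ (Chernoff, as in Part 3), that $\Delta(G_{t,m})=O(\log t)$, and that the first moment of the number of $\le a$-cycles in $G_{t,m}$ is $O(\log t)$, one obtains --- by Markov's inequality together with a union bound over dyadic scales, which is what is needed to make the bound valid simultaneously for \emph{all} $t$ --- that with probability $\ge1-\varepsilon/2$ one has $|D_t|\le t^{\delta}\,\mathrm{polylog}(t)$ for every $t$, where $\delta>0$ may be taken as small as we wish. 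The structural step is next: if the first or second bullet fails there is a connected subgraph $H$ with $\le 3a$ vertices which contains a $\le a$-cycle (resp.\ joins two vertices of $[n_0]$), has a vertex outside $[N_0]$, and is anchored near $[n_0]$; taking $H$ \emph{minimal}, its maximal vertex $w^{\ast}>N_0$ has at least two neighbours inside $H$, all of smaller index, and one checks that these neighbours belong to $D_{w^{\ast}-1}$. (A violation of the third bullet gives the same conclusion with the roles of $[n_0]$ and a short cycle interchanged, and with $w^{\ast}>n_0$.) Consequently the probability that some bullet fails is at most $\varepsilon/2+\sum_{w>N_0}\binom m2\bigl(|D_{w-1}|/(w-1)\bigr)^2\le \varepsilon/2+\sum_{w>N_0}w^{-2+2\delta}\,\mathrm{polylog}(w)$, which for $\delta<\tfrac14$ is $\varepsilon/2+o_{N_0}(1)$ uniformly in $n\ge n_0$ (for the third bullet one sums from $n_0$ instead and lets $n_0$ be large); choosing $n_0$ and then $N_0$ large enough makes the whole expression $\le\varepsilon$.

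The step I expect to be the main obstacle is precisely the structural reduction in Part 1 --- checking that every violation really does force the last-added vertex of a minimal witness to make $\ge 2$ choices inside the dangerous set. The difficulty lies in the degenerate configurations: the connecting path attaching to the cycle exactly at the maximal vertex, the maximal vertex being a cut vertex that separates the core-side from the cycle-side, two short cycles meeting only at their common maximal vertex, and similar cases. In each of these the naive "$\ge2$ dangerous choices" assertion has to be either re-proved with an enlarged distance threshold (replacing $a$ by $O(a)$ throughout, which is already built into the definition of $D_t$) or weakened to a statement that is still summable, e.g.\ "at least two of the chosen neighbours are close to each other and at least one of them is close to $[n_0]$", which still yields a bound of the form $\sum_w(|D_{w-1}|/w)^2$. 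A secondary, more routine, difficulty is the uniform-in-$t$ control of $|D_t|$: this is what forces the extra $t^{\delta}$ slack and the dyadic union bound, and one must track the polylogarithmic exponents --- which grow with $a$, hence ultimately with $m$ through the admissible quantifier depth --- carefully enough that the final series still converges.
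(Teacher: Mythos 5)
Your Part~3 matches the paper's argument: the degree of a fixed $v\le N_0$ dominates a sum of independent Bernoullis with means $m/t$, so the expectation $\sim m\ln n$ diverges and a Chernoff/Markov bound plus a union bound over $[N_0]$ finishes. Your Part~2 is a genuinely different route. The paper shows that, at each step $t>N$, the new vertex closes a fresh $b$-cycle disjoint from $[N_0]$ with probability roughly $1/(2t)$, so the cycle count stochastically dominates a sum of independent Bernoullis whose total mean $\sim\tfrac12\ln n$ diverges, and Markov on $e^{-\sum\tilde\xi_j}$ gives the claim. You instead slice $[n]$ into dyadic blocks $I_k$, compute $\E[\#\{C_b\subseteq I_k\}]=\Theta_b(1)$ and $\mathrm{Var}=O_b(1)$ by a second-moment estimate, and use that the events $\{I_k\supseteq C_b\}$ are independent across $k$ (the needed edges are determined entirely by the choices made at steps lying in $I_k$). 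Both are correct; the paper's version is a bit shorter since it reuses Lemma~\ref{lem:max_degree}, while your block argument is more self-contained and makes the $\Theta(\log n)$ growth of the cycle count explicit.

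Your Part~1 has the same skeleton as the paper's: a polylogarithmically small ``dangerous'' set $D_t$ (the paper takes the $3a$-neighbourhood of all $\le a$-cycles together with the $a$-neighbourhood of $[n_0]$; Lemmas~\ref{lem:max_degree} and~\ref{lem:number_of_cycles} give $|D_t|=\mathrm{polylog}(t)$ outright, so the $t^\delta$ slack you allow is unnecessary but harmless), and a summation of $\bigl(|D_t|/t\bigr)^2$ over $t>N_0$. You are right to flag the structural reduction as the sore point, and the worry is not hypothetical: the clean criterion ``the last-added witness vertex $w$ sends $\ge2$ of its $m$ edges into $D_{w-1}$'' does \emph{not} catch every violation. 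Concretely, $w$ may send one edge to a vertex near $[n_0]$ (or near an old cycle) and two further edges to a pair $u_1,u_2$ that is joined by a short path but is far from $[n_0]$ and from every old short cycle; this closes a new short cycle through $w$ lying within distance $<a$ of $[n_0]$ (or of the old cycle) while only one edge lands in $D_{w-1}$. The paper is terse here as well: for the third bullet it records the two events ``$\ge2$ edges into $D$'' and ``$w$ lies on two short cycles sharing $\le1$ edge at $w$'', and for the first two bullets only ``$\ge2$ edges into $D$''. The remedy you sketch --- also excluding ``$\ge1$ chosen neighbour in $D_{w-1}$ and $\ge2$ chosen neighbours within distance $O(a)$ of each other'' --- is the right additional event, and its step-$w$ probability is $O\bigl(|D_{w-1}|\cdot\mathrm{polylog}(w)/w^2\bigr)$ (rather than exactly $(|D_{w-1}|/w)^2$), which is still summable. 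So your proposal is sound, modulo spelling out this extra case.
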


Lemma~\ref{lem:prob_main} is proven in Section~\ref{sec:prob}.\\

Assume that a FO sentence $\varphi$ with at most $m-2$ variables and quantifier depth $R\geq m-2$ has no limit probability. Then there exist $p_1,p_2\in[0,1]$ with $p_1>p_2$ and increasing sequences $\{n^1_i\}_{i\in\mathbb{N}}$, $\{n^2_i\}_{i\in\mathbb{N}}$ of positive integers such that $\Pr\left[G_{n^1_i,m}\models\varphi\right]\geq p_1$, $\Pr\left[G_{n^2_i,m}\models\varphi\right]\leq p_2$ for every $i\in\mathbb{N}$. 

Fix $\varepsilon=\frac{p_1-p_2}{4}$. Set $a=3^R$. 
Set $n_0=n_0(\varepsilon)$, $N_0=N_0(\varepsilon)$ (their existence is stated in Lemma~\ref{lem:prob_main}). Define the following properties of graphs on the vertex set $[n]$.
\begin{itemize}
\item[${\sf Q1}$] The following conditions hold.
\begin{itemize}
\item For every cycle with at most $a$ vertices,

\begin{itemize}

\item either it has all vertices inside $[N_0]$, and any path connecting $[n_0]$ with this cycle and heaving length at most $a$ has all vertices inside $[N_0]$, 
 
\item or it is at distance at least $a$ from $[n_0]$; 

\end{itemize}

\item every path with at most $a$ vertices joining two vertices of $[n_0]$ has all vertices inside $[N_0]$; 

\item any two cycles with all vertices in $[n]\setminus[n_0]$ and having at most $a$ vertices are at distance at least $a$ from each other.
\end{itemize}
\item[${\sf Q2}$] For every $b\leq a$, there exist at least $m$ distinct copies of $C_b$ with all vertices in $[n]\setminus[N_0]$.
\item[${\sf Q3}$] Every vertex of $[N_0]$ has degree at least $N_0+m$.
\end{itemize}

By Lemma~\ref{lem:prob_main}, a.a.s. $G_{n,m}$ has properties ${\sf Q2}$, ${\sf Q3}$, and the property ${\sf Q1}$ holds with probability at least $1-\varepsilon$. Theorem~\ref{m-law} follows from the lemma below. The lemma itslef is proven in Section~\ref{proof_claim_m-game}.

\begin{lemma}
Let $H_1,H_2$ be graphs on vertex sets $[n_1]$ and $[n_2]$ respectively with minimum degrees at least $m$. Let $H_1|_{[N_0]}= H_2|_{[N_0]}$ and both $H_1$ and $H_2$ have properties ${\sf Q1}$, ${\sf Q2}$, ${\sf Q3}$. Then Duplicator wins the $(m-2)$-pebble game on $H_1$ and $H_2$ in $R$ rounds.
\label{claim_m-game}
\end{lemma}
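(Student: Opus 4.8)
The plan is to describe Duplicator's winning strategy by an invariant that is maintained after every round and which, by construction, guarantees that the pebbled subgraphs are always isomorphic. Fix $r\le R$ rounds remaining and suppose the pebbled vertices are $\bar x=(x_1,\dots,x_k)$ in $H_1$ and $\bar y=(y_1,\dots,y_k)$ in $H_2$, where $k\le m-2$ is the number of pebbles currently placed. Since only $m-2$ pebbles are available and $R\ge m-2$, after the first round Spoiler has picked up and replaced a pebble before he can ever have all $m-2$ of them simultaneously far from $[n_0]$ while still threatening. The invariant I would carry is: (i) the map $x_i\mapsto y_i$ is a partial isomorphism of $H_1\restriction\{x_i\}$ onto $H_2\restriction\{y_i\}$; (ii) for every $i$, either $x_i=y_i$ and both lie in $[N_0]$ (a ``low'' pebble), or $x_i,y_i$ lie outside $[N_0]$ (a ``high'' pebble), and all high pebbles are pairwise at distance $>3^{r}$ from each other and from $[n_0]$, in both graphs, with isomorphic $3^{r}$-neighbourhoods; and (iii) the number of remaining rounds controls the radius $3^r$ in which the neighbourhoods of high pebbles agree (so the radius shrinks by a factor $3$ per round, which is why $a=3^R$ was chosen). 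Property ${\sf Q3}$ (every low vertex has degree $\ge N_0+m$, hence $\ge N_0+1+(m-2)$ available neighbours not already pebbled or used) is what lets Duplicator answer a Spoiler move into a neighbour of a low vertex; property ${\sf Q1}$ guarantees that neighbourhoods of high vertices are trees until they are large enough to reach $[N_0]$, so they are all isomorphic and Duplicator can copy tree structure freely; and property ${\sf Q2}$ supplies, for each cycle length $b\le a$, at least $m$ vertex-disjoint copies of $C_b$ deep outside $[N_0]$, so that if Spoiler ever forces Duplicator to create a cycle through a high pebble, there are enough fresh copies to match it while keeping all high pebbles far apart.

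The key steps, in order, are: (1) state the invariant precisely and check it holds at the start, where no pebbles are placed (or, more carefully, reduce to the situation after the first move); (2) the inductive step, split by cases on Spoiler's move. If Spoiler pebbles a vertex $u$ in $H_1$ within distance $3^{r-1}$ of some already-pebbled low vertex or of $[n_0]$, then by ${\sf Q1}$ the relevant $3^{r-1}$-ball is a fixed finite graph appearing identically in $H_2$ (because $H_1\restriction[N_0]=H_2\restriction[N_0]$ and ${\sf Q1}$ pins down everything at distance $\le a$ from $[n_0]$), so Duplicator plays the corresponding vertex. If $u$ is ``far'' from all low pebbles and from $[n_0]$ but within $3^{r-1}$ of some high pebble $x_i$, Duplicator uses the isomorphism of $3^r$-neighbourhoods of $x_i$ and $y_i$ and plays the image of $u$; the neighbourhood radius bookkeeping still works because $3^{r-1}+3^{r-1}<3^r$. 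If $u$ is far from everything, Duplicator must produce a fresh ``high'' vertex in $H_2$ far from all current pebbles and from $[n_0]$, with a $3^{r-1}$-neighbourhood isomorphic to that of $u$; here one uses ${\sf Q1}$ (so the neighbourhood of $u$ is either a tree, handled by picking any deep vertex whose neighbourhood is a tree, which exists since $H_2$ is infinite-ish — i.e. $n_2$ large — with bounded degree forbidden, wait, degrees are unbounded, but ${\sf Q1}$ still forces trees locally) or contains exactly one short cycle $C_b$, handled by ${\sf Q2}$ which provides an unused copy of $C_b$; (3) finally, verify that whenever Spoiler picks up a pebble and replaces it, the ``distance $>3^{r}$'' condition for the decremented $r$ is automatically looser, so no invariant is violated by the mere passage of a round, and that at the end ($r=0$) the invariant's clause~(i) is exactly the winning condition of Theorem~\ref{Ehren}.

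The main obstacle I expect is the bookkeeping in case~(3) of the inductive step — placing a genuinely new high pebble. Duplicator needs a vertex in $H_2$ that is simultaneously (a) far ($>3^{r-1}$) from every one of the at most $m-3$ other pebbles and from $[n_0]$, and (b) has a $3^{r-1}$-neighbourhood isomorphic to the prescribed one. When that neighbourhood is a tree this is easy from ${\sf Q1}$ plus the fact that most vertices are deep; the delicate point is when Spoiler's vertex $u$ sits on a short cycle, because then Duplicator must invoke ${\sf Q2}$ to find a matching cycle $C_b$, and must check there is still one left that is far from all pebbles — this is exactly why ${\sf Q2}$ guarantees $m$ disjoint copies (more than the $m-2$ pebbles), and why ${\sf Q1}$'s last clause (short cycles outside $[n_0]$ are pairwise $a$-far apart) ensures that committing to one copy does not spoil the neighbourhoods of previously placed high pebbles. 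A secondary subtlety, worth isolating as a sub-step, is the interaction between low and high pebbles when a high pebble's shrinking neighbourhood grows to touch $[N_0]$: by ${\sf Q1}$, once a short path of length $\le a$ reaches $[n_0]$ from a cycle, that whole configuration already lies in $[N_0]$ and hence is common to both graphs, so such a pebble can be re-classified as effectively ``low'' without breaking anything — one should check this re-classification is consistent with the partial isomorphism clause.
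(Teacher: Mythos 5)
Your plan follows the paper's general approach (Ehrenfeucht pebble game, a round-indexed invariant with shrinking radius, a low/high pebble dichotomy keyed to ${\sf Q1}$--${\sf Q3}$), but as stated it has two gaps that the paper's proof is specifically designed to close.

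First, you never confront the fact that high-pebble neighbourhoods are \emph{not} isomorphic in the ordinary sense: the degree of a vertex in $G_{n,m}$ is unbounded, so two tree-like balls of the same radius will typically have different degree sequences and cannot be literally matched. You flag this (``wait, degrees are unbounded'') and then move past it, but the resolution is not free; it is the central technical device of Section~4. The paper introduces $a$-truncations $T^-_a$, the notion of $a$-isomorphism, and $(m-1)$-trivial trees / $(m-2)$-trivial unicyclic graphs, and then shows it suffices for Duplicator to maintain isomorphisms between truncated representative subgraphs $\left(\mathcal{B}_i^{\lambda}\right)^*$, not the balls themselves. This is also exactly where the hypothesis ``at most $m-2$ variables'' with minimum degree $\geq m$ is used: $m-2$ pebbles leave at least two fresh children at every level of the $(m-1)$-ary (resp.\ $(m-2)$-ary) perfect tree, so Duplicator can always extend the partial embedding. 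Without the truncation/arity argument, step~(2) of your plan (``Duplicator plays a vertex with isomorphic $3^{r-1}$-neighbourhood'') has no correct target to aim at.

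Second, your stated invariant --- ``all high pebbles are pairwise at distance $>3^r$ from each other'' --- cannot be maintained: Spoiler can legally place a new pebble inside the ball of an existing high pebble, and the resulting two high pebbles are then close in both graphs. The paper handles this by carrying, for each pebble, an isomorphism ${\sf IS}_i^r$ of truncated balls that maps earlier pebbles to their partners and records matching distances (its last clause: either both pairs are far, or the distances are equal and small), rather than a blanket far-apartness condition. Relatedly, your ``re-classify high pebbles as effectively low once they touch $[N_0]$'' is too coarse: the pebble may itself be outside $[N_0]$ with a ball that straddles $[N_0]$, and the paper's third pebble type ($3_i^r$), with its decomposition into $T_i^{\lambda}\cup H_{\lambda}|_{V_i}\cup\mathcal{F}_i^{\lambda}$ and the identity map on $V_i\subset[N_0]$, is needed to make the partial isomorphism precise in this regime. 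Your use of ${\sf Q2}$ and of the cycle-separation clause of ${\sf Q1}$ to supply a fresh far cycle is correct in spirit and matches the paper.
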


Indeed, for $i$ satisfying $\min\{n_i^1,n_i^2\}>N_0$,
$$
 \Pr\left(G_{n^1_i,m}\models\varphi,G_{n^2_i,m}\not\models\varphi\right)\geq
 \Pr\left(G_{n^1_i,m}\models\varphi\right)-\Pr\left(G_{n^2_i,m}\models\varphi\right)\geq p_1-p_2.
$$
By Theorem~\ref{Ehren}, for the event 
$$\mathcal{A}_{n,m}:=\left\{\text{Spoiler wins } R\text{-round }(m-2)\text{-pebble game on } G_{n^1_i,m},G_{n^2_i,m}\right\}$$
we get
$$
 \Pr\left({A}_{n,m}\wedge\bigwedge_{\ell=1}^2\bigwedge_{j=1}^3\left\{G_{n^{\ell}_i,m}\in{\sf Qj}\right\}\right)
$$
$$
 \geq\Pr\left(\left\{G_{n^1_i,m}\models\varphi,G_{n^2_i,m}\not\models\varphi\right\}\wedge\bigwedge_{\ell=1}^2\bigwedge_{j=1}^3\left\{G_{n^{\ell}_i,m}\in{\sf Qj}\right\}\right)
$$
$$
 \geq\Pr\left(G_{n^1_i,m}\models\varphi,G_{n^2_i,m}\not\models\varphi\right)-\sum_{\ell\in\{1,2\},j\in\{1,2,3\}}\Pr\left(G_{n^{\ell}_i,m}\notin{\sf Qj}\right)
$$
$$ 
\geq p_1-p_2-2\varepsilon-o(1)=\frac{p_1-p_2}{2}-o(1)
$$
which is bounded away from 0, and that contradicts Lemma~\ref{claim_m-game}.

\section{Proof of Lemma~\ref{lem:prob_main}}
\label{sec:prob}

To prove Lemma~\ref{lem:prob_main}, we first need the following standard facts (see, e.g.,~\cite{dags}) about the maximum degree and the number of cycles of a giving length in uniform attachment random graphs. We give here short proofs for the sake of convenience.\\

Let $\deg_n j$ be the degree of vertex $j$ in $G_{n,m}$ and $\Delta_n=\max_{j\in[n]}\deg_{n}j$ be the maximum degree of $G_{n,m}$. 
\begin{lemma}
\label{lem:max_degree}
For any $\epsilon>0$ there is a constant $c_{\epsilon}$, such that 
$$
\Pr(\forall n\quad \Delta_n>c_{\epsilon} (\ln n)^2)<\epsilon .$$
\end{lemma}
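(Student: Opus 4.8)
\emph{Proof plan.} The plan is to bound the degree of each individual vertex by a constant plus a sum of \emph{independent} Bernoulli variables, apply a Chernoff-type large-deviation bound, and finish with a union bound over all vertices and all times. Here the statement is read as: with probability at least $1-\epsilon$ one has $\Delta_n\le c_\epsilon(\ln n)^2$ for every $n$ simultaneously, i.e. $\Pr(\exists\,n:\ \Delta_n>c_\epsilon(\ln n)^2)<\epsilon$.

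First I would fix a vertex $j$ and note that, when vertex $k+1$ is inserted (for $k\ge\max\{j,m\}$), it joins $m$ distinct uniformly chosen vertices of $[k]$, so, conditionally on $G_{k,m}$, the indicator $\xi^{(j)}_k:=\1[\,j\sim(k+1)\,]$ is Bernoulli with parameter $m/k$, and this conditional probability does not depend on $G_{k,m}$. Hence $\xi^{(j)}_{\max\{j,m\}},\xi^{(j)}_{\max\{j,m\}+1},\dots$ are independent and, putting $S_{j,n}:=\sum_{k=\max\{j,m\}}^{n-1}\xi^{(j)}_k$,
$$
\deg_n j\ \le\ m+S_{j,n},\qquad \E\,S_{j,n}\ \le\ m\sum_{k<n}\frac1k\ \le\ 2m\ln n\ =:\ \mu_n ,
$$
uniformly in $j$. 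Applying the standard Chernoff bound $\Pr(S_{j,n}\ge t)\le(e\mu_n/t)^{\,t}$ (valid for $t\ge\mu_n$) with $t=c_\epsilon(\ln n)^2-m$, which exceeds $\tfrac12 c_\epsilon(\ln n)^2\ge\mu_n$ once $n$ is larger than a threshold depending only on $c_\epsilon$ and $m$, yields
$$
\Pr\!\big(\deg_n j>c_\epsilon(\ln n)^2\big)\ \le\ \Big(\tfrac{4em}{c_\epsilon\ln n}\Big)^{c_\epsilon(\ln n)^2/2}\qquad\text{uniformly in }j\in[n].
$$

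Then I would take the union bound over $j$ and over $n$, splitting the times according to whether $c_\epsilon(\ln n)^2\ge n$ or not. On the first range the event $\{\Delta_n>c_\epsilon(\ln n)^2\}$ is impossible, since a graph on $n$ vertices has $\Delta_n\le n-1$; and if $n^*=n^*(c_\epsilon)$ denotes the largest $n$ on this range, then $n^*\to\infty$ (hence $\ln n^*\to\infty$) as $c_\epsilon\to\infty$, which both validates the displayed tail bound for all $n>n^*$ and forces $4em/(c_\epsilon\ln n)\le 4em/(c_\epsilon\ln n^*)\le\tfrac12$ there. Consequently
$$
\Pr\big(\exists\,n:\ \Delta_n>c_\epsilon(\ln n)^2\big)\ \le\ \sum_{n>n^*(c_\epsilon)}n\cdot 2^{-c_\epsilon(\ln n)^2/2},
$$
which is a convergent series whose value tends to $0$ as $c_\epsilon\to\infty$ (each summand decreases to $0$ in $c_\epsilon$, and the summation starts at $n^*(c_\epsilon)\to\infty$); choosing $c_\epsilon$ large enough makes the right-hand side smaller than $\epsilon$.

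\emph{Main obstacle.} I expect the only genuine difficulty to be the requirement that the bound hold for \emph{all} $n$ at once: for small $n$, where $(\ln n)^2$ is tiny, the Chernoff estimate is vacuous, and one must fall back on the deterministic inequality $\Delta_n\le n-1$, using that the set $\{n:\ c_\epsilon(\ln n)^2\ge n\}$ on which this trivial bound already suffices absorbs every fixed finite range of times once $c_\epsilon$ is large. The remaining ingredients --- a large-deviation bound for an independent Bernoulli sum together with two nested union bounds --- are entirely routine.
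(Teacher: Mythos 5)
Your proof is correct and follows essentially the same route as the paper: decompose $\deg_n j$ into a constant plus a sum of independent Bernoulli indicators over attachment times, apply a Chernoff-type large-deviation bound (the paper does this directly via the exponential Markov inequality applied to $\E\exp[\deg_n j]$, you via a packaged multiplicative Chernoff bound), and then union-bound over $j\in[n]$ and over $n$. Your extra split into small $n$ (handled by the trivial bound $\Delta_n\le n-1$) and large $n$ is harmless but not actually needed, since the paper's resulting series $\sum_n n\,e^{-C(\ln n)^2+m(e-1)\ln n+O(1)}$ already converges for every $C>0$ and tends to $0$ as $C\to\infty$.
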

\begin{proof}
Clearly, for every $j<n$ the probability that $j\sim n$ in $G_{n,m}$ is exactly $\frac{m}{n-1}$. Since drawing edges in different time-steps
of generating the graph are independent from each other, we get that, by Markov inequality, for any $C>0$,
$$
\Pr\left(\deg_n j\geq C\left(\ln n\right)^2\right)
=\Pr\left(\exp\left[\deg_n j\right]\geq \exp\left[C\left(\ln n\right)^2\right]\right)
$$
$$
\leq e^{m-C\left(\ln n\right)^2}\prod_{i=j}^{n-1}\left(1+\frac{m(e-1)}{i}\right)
\leq e^{-C\left(\ln n\right)^2+m(e-1)\ln n+O(1)}.
$$
Therefore, 
$$
\Pr\left(\exists n\quad\Delta_n\geq C\left(\ln n\right)^2\right)\leq \sum_{n=m+1}^{\infty} ne^{-C\left(\ln n\right)^2+m(e-1)\ln n+O(1)}.
$$
Since the right side of this inequality converges, and approaches 0 as $C\to\infty$, Lemma~\ref{lem:max_degree} follows. 
\end{proof}

Notice that, in the proof of Lemma~\ref{lem:max_degree} we show that, uniformly in $n\in\mathbb{N}$,
\begin{equation}
\Pr(\Delta_n\geq C\left(\ln n\right)^2)\leq e^{-C\left(\ln n\right)^2(1+o(1))}.
\label{prob_Delta}
\end{equation}

Let $\mathcal{C}_k(n)$ be the number of cycles of length $k$ in $G_{n,m}$.
\begin{lemma}
\label{lem:number_of_cycles}
For any $\epsilon>0$ there is a constant $c'_{\epsilon}$, such that 
$$\Pr\left(\forall n\quad \mathcal{C}_k(n)>c'_{\epsilon} (\ln n)^{2(k-2)^2+2}\right)<\epsilon .$$
\end{lemma}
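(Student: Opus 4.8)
The plan is to bound the expectation $\E[\mathcal{C}_k(n)]$, show it is polylogarithmic in $n$, and then promote this to a bound valid simultaneously for all $n$ by exploiting that $n\mapsto\mathcal{C}_k(n)$ is non-decreasing together with a union bound over a geometric subsequence of values of $n$.

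To bound the expectation, I would first dispose of the trivial cases ($m=1$ or $k\le2$, where $\mathcal{C}_k(n)=0$), so assume $m\ge2$ and $k\ge3$. Enumerate potential $k$-cycles as tuples $(v_0,\dots,v_{k-1})$ of distinct vertices listed in cyclic order, and for a vertex $v$ of such a cycle let $d(v)\in\{0,1,2\}$ be the number of its two cyclic neighbours with smaller label (so the sum of the $d(v)$ over the $k$ cycle vertices equals $k$). The $d(v)$ edges from $v$ to its smaller cyclic neighbours are all drawn at the step at which $v$ appears, and — as recalled in the proof of Lemma~\ref{lem:max_degree} — edges drawn at distinct steps are independent; since a fixed earlier vertex is picked with probability at most $m/(v-1)$, the probability that a given potential cycle is present is at most $c_1^k\prod_v v^{-d(v)}$ for a constant $c_1=c_1(m)$, the finitely many small $v$ being absorbed into $c_1$. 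Grouping cyclic order types (there are fewer than $k!$ of them) and then choosing the $k$ labels $w_1<\dots<w_k$ for a fixed type, this gives
\[
\E[\mathcal{C}_k(n)]\ \le\ k!\,c_1^k\,\max_{\text{type}}\ \sum_{1\le w_1<\dots<w_k\le n}\ \prod_{j=1}^{k}w_j^{-\delta_j},
\]
where $\delta_j\in\{0,1,2\}$ is the $d$-value, determined by the type, of the $j$-th smallest vertex.

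The crux is estimating the inner sum. I would use the elementary fact that, going round a cycle, the local minima and the local maxima of the vertex labels alternate, so there are equally many, say $t\ge1$, of each; that a local minimum has $d=0$, a local maximum has $d=2$, and every other vertex has $d=1$; and that each local minimum is joined, along a label-increasing path inside the cycle, to a distinct local maximum of strictly larger label. Pairing each $d=0$ vertex with this $d=2$ partner, dropping all order constraints except that within a pair the $d=0$ vertex precedes its partner, and also dropping distinctness of the $w_j$, the inner sum factorises and is at most $\big(\sum_{a<b\le n}b^{-2}\big)^{t}\big(\sum_{c\le n}c^{-1}\big)^{k-2t}\le(\ln n+1)^{k-t}\le(\ln n+1)^{k-1}$. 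Hence $\E[\mathcal{C}_k(n)]\le c_2(\ln n)^{k-1}$ for a constant $c_2=c_2(k,m)$ and all $n\ge2$. (Summing one variable at a time, using instead that the $j$ smallest vertices of a $k$-cycle span at most $j-1$ of its edges, one can sharpen this to $\E[\mathcal{C}_k(n)]=O(\ln n)$, but the crude bound already suffices.)

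Finally, since $\mathcal{C}_k(n)$ is non-decreasing in $n$, if $\mathcal{C}_k(n)>c(\ln n)^{2(k-2)^2+2}$ for some $n$ with $2^j\le n<2^{j+1}$, then $\mathcal{C}_k(2^{j+1})>c(j\ln2)^{2(k-2)^2+2}$, so by Markov's inequality
\[
\Pr\Big(\exists n:\ \mathcal{C}_k(n)>c(\ln n)^{2(k-2)^2+2}\Big)\ \le\ \sum_{j\ge1}\frac{\E[\mathcal{C}_k(2^{j+1})]}{c\,(j\ln2)^{2(k-2)^2+2}}\ \le\ \frac{c_3}{c}\sum_{j\ge1}j^{\,(k-1)-2(k-2)^2-2},
\]
and the series converges because $(k-1)-2(k-2)^2-2\le-2$ for every $k\ge3$; thus the probability is at most $c_3/c$, which is below $\epsilon$ once $c=c'_\epsilon$ is taken large enough (every $n\ge2$ lies in some dyadic block with $j\ge1$, so no separate treatment of small $n$ is needed). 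The step I expect to be the main obstacle is precisely the estimate of $\sum_{w_1<\dots<w_k}\prod w_j^{-\delta_j}$: a naive factorisation that ignores the ordering $w_1<\dots<w_k$ produces a spurious polynomial factor $n^t$, so one genuinely has to exploit that each $d=0$ vertex can be matched to a larger $d=2$ vertex (equivalently, the "few edges among the small vertices" constraint). Everything else is routine, and the exponent $2(k-2)^2+2$ is far from optimal — any fixed polylogarithmic bound in $n$ would do.
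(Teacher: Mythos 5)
Your proof is correct, but it takes a genuinely different route from the paper's. The paper conditions on the maximum-degree bound from Lemma~\ref{lem:max_degree}, which caps the size $F_n$ of every $(k-2)$-neighbourhood; it then bounds the probability that the new vertex $n+1$ closes a $k$-cycle by $m(m-1)F_n/n$, uses an exponential-moment argument to show that the number of such cycle-creating steps up to time $n$ is $O\left((\ln n)^{2k-2}\right)$, and multiplies by the at most $\binom{m}{2}F_n^{k-3}$ cycles created at each such step, which is where the exponent $2(k-2)^2+2$ comes from. You instead compute $\E[\mathcal{C}_k(n)]$ directly by a first-moment count over potential $k$-cycles: the presence probability of a fixed labelled cycle is $\prod_v O\!\left(v^{-d(v)}\right)$ with $d(v)\in\{0,1,2\}$ the number of smaller cyclic neighbours, and the crucial combinatorial observation is that the $d$-values split into $t$ zeros at local label-minima, $t$ twos at local label-maxima, and $k-2t$ ones, with each minimum pairable to a strictly larger maximum; relaxing the ordering to just the $t$ pairwise constraints factorises the sum and gives $\E[\mathcal{C}_k(n)]=O\!\left((\ln n)^{k-1}\right)$. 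Monotonicity of $\mathcal{C}_k$, a dyadic union bound, and Markov's inequality then yield the (implicitly $\exists\,n$) statement with room to spare, since $(k-1)-2(k-2)^2-2\le -2$ for $k\ge 3$. Your route is more elementary (no conditioning, no exponential moments), self-contained (it does not lean on Lemma~\ref{lem:max_degree}), and yields a much sharper polylogarithmic exponent; the paper's route sidesteps the cycle-ordering combinatorics entirely at the price of a cruder exponent and a dependence on the max-degree lemma. The only nitpick is the claim that no separate treatment of small $n$ is needed: for $n$ of constant size (e.g.\ $n=m$, where $G_{m,m}\cong K_m$ already contains cycles) the quantity $(\ln n)^{2(k-2)^2+2}$ can be $O(1)$, so one should note that $\mathcal{C}_k(n)$ is bounded by a constant for bounded $n$ and fold those finitely many $n$ into the choice of $c'_\epsilon$ — but this is trivial and does not affect the argument.
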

\begin{proof}
Let $c_{\varepsilon/2}$ be the constant from Lemma~\ref{lem:max_degree}.
The event 
$$
\mathcal{D}_n:=\left\{\Delta_n\leq c_{\varepsilon/2}\left(\ln n\right)^2\right\}
$$ 
implies that every vertex in $G_{n,m}$ has a $(k-2)$-neighbourhood of size at most $F_n:=(c_{\varepsilon/2}\ln^2 n)^{k-2}$. Then, the probability of forming a cycle of length $k$ at time $n+1$ (or, in other words, to join vertex $n+1$ to two vertices such that the second one is in the $(k-2)$-neighborhood of the first one) conditioned on $\mathcal{D}_n$ does not exceed $m(m-1)F_n/n$. Let $\xi_n$ be a Bernoulli random variable that equals 1 if the vertex $n$ belongs to a $k$-cycle in $G_{n,m}$. From above, there exist independent Bernoulli random variables $\tilde\xi_n$, $n\geq m+1$, with success probability $\frac{m(m-1)F_n}{n}+1-\Pr(\mathcal{D}_n)$ such that $\xi_n\leq\tilde\xi_n$ for every $n$. We get that, for every $C>0$,
$$
\Pr\left(\xi_{m+1}+\ldots+\xi_n>C\left(\ln n\right)^{2k-2}\right)\leq
\Pr\left(\tilde\xi_{m+1}+\ldots+\tilde\xi_n>C\left(\ln n\right)^{2k-2}\right)
$$
$$
=\Pr\left(\exp\left[\tilde\xi_{m+1}+\ldots+\tilde\xi_n\right]>\exp\left[C\left(\ln n\right)^{2k-2}\right]\right)
$$
$$
\leq e^{-C(\ln n)^{2k-2}}\prod_{i=m}^{n-1}\left(1+(e-1)\frac{m(m-1)F_i}{i}+1-\Pr(\mathcal{D}_n)\right).
$$
The bound~(\ref{prob_Delta}) implies that
$$
\Pr\left(\xi_{m+1}+\ldots+\xi_n>C\left(\ln n\right)^{2k-2}\right)\leq e^{-C(\ln n)^{2k-2}+O\left([\ln n]^{2k-3}\right)}.
$$
Since $\sum_ne^{-C(\ln n)^{2k-2}(1+o(1))}$ converges and approaches 0 as $C\to\infty$, there exists $ c'_{\varepsilon}$ such that 
$$
\Pr\left(\exists n\quad \xi_{m+1}+\ldots+\xi_n>c'_{\varepsilon}\left(\ln n\right)^{2k-2}\right)<\varepsilon/2.
$$ 
Finally, conditioned on $\mathcal{D}_n$, the vertex $n+1$ gives at most ${m\choose 2}F_n^{k-3}$ new $k$-cycles. Since $\Pr\left(\bigcap_{n\in\mathbb{N}}\overline{\mathcal{D}_n}\right)<\varepsilon/2$, (as usual, $\overline{\mathcal{D}_n}$ is the complement event of $\mathcal{D}_n$) Lemma~\ref{lem:number_of_cycles} follows. \end{proof}

Now let us prove Lemma~\ref{lem:prob_main}.
\begin{proof}
Let us prove the first part of Lemma~\ref{lem:prob_main}. 
Due to Lemma~\ref{lem:max_degree} and Lemma~\ref{lem:number_of_cycles}, for any $\varepsilon>0$, there is a constant $c_{\varepsilon}$ such that, with probability at least $1-\varepsilon/3$, for every $n$, 
\begin{equation}
\Delta_n\leq c_{\varepsilon}\left(\ln n\right)^2
\label{cond1}
\end{equation}
 and, for any $a\in\N$, the union of $3a$-neighborhoods of all cycles of length at most $a$ (denote this union by $\mathcal{U}_a(n)$) contains at most $F_n:=a^2c'_{\epsilon}(c_{\varepsilon})^{3a}(\ln n)^{2a^2-2a+10}$ vertices (the power of $\ln n$ is obtained by combining Lemma~\ref{lem:number_of_cycles} with (\ref{cond1}): $2(a-2)^2+2+6a=2a^2-2a+10$). Under the condition that 
\begin{equation} 
\left|\mathcal{U}_a(n)\right|\leq F_n,
\label{cond2}
\end{equation} 
the probability that $n+1$ is adjacent to at least two vertices of $\mathcal{U}_a(n)$ in $G_{n+1,m}$ does not exceed $m^2F^2_n/n^2$. At the same time, under Condition (\ref{cond1}), the $(a-2)$-neighborhood of a vertex contains at most $(c_{\varepsilon}\left(\ln n\right)^2)^{a-2}$ vertices. It implies that, in $G_{n+1,m}$, the vertex $n+1$ belongs to two cycles of length at most $a$ that share at most one edge adjacent to $n+1$, with probability not exceeding $2m^4a^2c_{\varepsilon}^{2a-4}(\ln n)^{4a-8}/n^{2}$. Indeed, if such pair of cycles has one common edge adjacent to $n+1$ (say, the edge $\{n+1,v\}$), then, after the choice of the vertex $v$ (note that one of $m$ edges drawn from $n+1$ plays the role of $\{n+1,v\}$), each of the other two neigbors of $n+1$ in these two cycles can be chosen in at most $ma(c_{\varepsilon}\left(\ln n\right)^2)^{a-2}$ ways. On the other hand, if a pair of cycles does not have a common edge adjacent to $n+1$, then, after the choice of neighbors $v_1,v_2$ of $n+1$ within these two cycles, the other pair of vertices can be chosen in at most $ \left[ma(c_{\varepsilon}\left(\ln n\right)^2)^{a-2}\right]^2$ ways.


Note that two cycles of length at most $a$ and at distance at most $a$ from each other can be formed at step $n+1$ either by joining $n+1$ with two vertices of $\mathcal{U}_a(n)$ or by drawing two cycles that share the vertex $n+1$ and at most 1 edge adjacent to $n+1$. Since both $\sum_n F^2_n/n^2$ and $\sum_{n}(\ln n)^{4a-8}/n^{2}$ converge, there exists $n_0$ such that, under the condition that (\ref{cond1}) and  (\ref{cond2}) hold for all $n$, the probability that, for any $n>n_0$, any two cycles with all vertices in $[n]\setminus[n_0]$ and having at most $a$ vertices are at distance at least $a$ from each other is at least $1-\varepsilon/3$. 

Now, let $\mathcal{U}_{[n_0],a}(n)$ be the $a$-neighborhood of $[n_0]$ in $G_{n,m}$. (\ref{cond1}) implies that it contains at most $an_0c_{\varepsilon}^{a}(\ln n)^{2a}$ vertices. Hence, under Conditions (\ref{cond1}) and (\ref{cond2}),  the probability that $n+1$ is adjacent to at least 2 vertices of $\mathcal{U}_{[n_0],a}(n)\bigcup \mathcal{U}_{a}(n)$ in $G_{n+1,m}$ does not exceed $m^2\left[an_0c_{\varepsilon}^{a}(\ln n)^{2a}+a^2c'_{\epsilon}(c_{\varepsilon})^{3a}(\ln n)^{2a^2-2a+10}\right]^2/n^2$. Therefore, there exists $N_0>n_0$ such that, with probability $1-\varepsilon/3$, for all $n\geq N_0$ the vertex $n+1$ is adjacent to at most one vertex of $\mathcal{U}_{[n_0],a}(n)\bigcup \mathcal{U}_{a}(n)$ in $G_{n+1,m}$. Part 1 of Lemma~\ref{lem:prob_main} follows.\\

Let us switch to the second part. From \eqref{prob_Delta}, for large enough $n$ (say, $n\geq N$), with probability at least $1-e^{-\left(\ln n\right)^2(1+o(1))}$, there are at least $n/2$ vertices at distance at least $a-1$ from $[N_0]$ (since its $(a-1)$-neighborhood has size $O\left((\ln n)^{2a-2}\right)$ which is less then $n/2$ for large $n$). Let $b\leq a$. The latter event implies that there are at least $n/4$ pairs of vertices joined by a simple path of length $b-2$ having all vertices outside $[N_0]$ (each of these at least $n/4$ simple paths can be obtained in the following way: connect a vertex which is at distance $a-1$ from $[N_0]$ with $[N_0]$ by a shortest path, and take the initial part of length $b-2$ of this path). To create a cycle of length $b$ at step $n+1$, we have to connect new vertex $n+1$ with two vertices at distance $b-2$ from each other. Therefore, with probability at least $1-e^{-\left(\ln n\right)^2(1+o(1))}$, there are at least $n/4$ possibilities out of ${n\choose 2}$ for first two edges drawn from $n+1$ to create a desired cycle. Let the Bernoulli random variable $\xi_{n+1}$ equal 1 if and only if $n+1$ belongs to a $b$-cycle in $G_{n+1,m}$ having all vertices outside $[N_0]$. Clearly, there exist independent Bernoulli random variables $\tilde\xi_{N+1},\tilde\xi_{N+2},\ldots$ such that, for every $j\in\{N+1,N+2,\ldots\}$, $\xi_j\geq\tilde\xi_j$ and $\Pr(\tilde\xi_j=1)=\frac{1}{2j}-e^{-\left(\ln j\right)^2(1+o(1))}$ ({\it uniformly in $j$}, i.e. the $o(1)$ can be bounded by a sequence approaching 0 and not depending on $j$).  Therefore, by Markov's inequality, for $n>N$,
\begin{align*}
 \Pr(\xi_{N+1}+\ldots+\xi_n<K) & \leq\Pr(\tilde\xi_{N+1}+\ldots+\tilde\xi_n<K)\\
 &=\Pr(e^{-(\tilde\xi_{N+1}+\ldots+\tilde\xi_n)}>e^{-K})\leq e^{K}\prod_{j=N+1}^n\E e^{-\tilde\xi_j}\\
&\leq e^K\prod_{j=N+1}^n\left(1-(1-1/e)\left[\frac{1}{2j}-e^{-\left(\ln j\right)^2(1+o(1))}\right]\right)\\
&=e^{K+\sum_{j=N+1}^{n}\ln\left(\left(1-(1-1/e)\left[\frac{1}{2j}-e^{-\left(\ln j\right)^2(1+o(1))}\right]\right)\right)}\\
&=e^{K-(1+o(1))\sum_{j=N+1}^{n}\frac{1-1/e}{2j}}=e^{-\ln n\frac{1-1/e+o(1)}{2}}=o(1).
\end{align*}
Part 2 follows.\\

Finally, let us prove that a.a.s. every vertex of $[N_0]$ has high degree. Let $j\in[N_0]$. For $n>N_0$, let $\xi_n$ be the Bernoulli random variable that equals 1 if and only if $n\sim j$ in $G_{n,m}$. Clearly, $\xi_{N_0+1},\xi_{N_0+2},\ldots$ are independent and $\Pr(\xi_n=1)=\frac{m}{n-1}$, $n>N_0$. Then, by Markov's inequality, for $n>N_0$,
\begin{multline*}
 \Pr(\xi_{N_0+1}+\ldots+\xi_n<K)=
 \Pr(e^{-(\xi_{N_0+1}+\ldots+\xi_n)}>e^{-K}) \\
\leq e^K\prod_{j=N_0+1}^n\left(1-(1-1/e)\frac{m}{j-1}\right)
=e^{-m(1-1/e+o(1))\ln n}=o(1).
\end{multline*}
Part 3 follows. 
\end{proof}

\section{Proof of Lemma~\ref{claim_m-game}}
\label{proof_claim_m-game}

The proof is based on the fact that Duplicator may play in a way such that, in the $r$-th round, for each $r\leq R$, the balls with radius $2^{R-r}$ and centers at pebbled vertices in one graph are similar (in some sense) to the respective balls in the other graph. This similarity for trees and unicyclic graphs can be easily defined by verifying the isomorphism between their spanning subgraphs obtained by some procedure defined in Section~\ref{trees_cycles}. 
The winning strategy of Duplicator is given in Section~\ref{win_strategy}. Although it is overloaded by technical details, the idea is quite simple. If a vertex pebbled by Spoiler in the $r$th round is far away from all the other pebbled vertices and from $[n_0]$, then the ball with its centre at this vertex contains at most one cycle. Duplicator pebbles a vertex that is also far enough from $[n_0]$ and all the remaining pebbled vertices, and with a similar $2^{R-r}$-neighborhood. If the vertex pebbled by Spoiler is far from all the other pebbled vertices but close to $[n_0]$, then $[N_0]$ divides its $2^{R-r}$-neighborhood into two parts. Duplicator chooses a vertex such that the intersection of its $2^{R-r}$-neighborhood with $[N_0]$ equals the intersection of the Spoiler's ball with $[N_0]$ and the remaining part (which is a forest) is similar to the rest of Spoiler's ball. Finally, if Spoiler chooses a vertex which is inside a $2^{R-r}$-neighborhood of a previously pebbled vertex, then the $2^{R-r}$-neighborhood of the respective pebbled vertex in the other graph is similar and, therefore, there is a suitable move for Duplicator inside this ball.

\subsection{Constructions}
\label{trees_cycles}

For a graph $G$ and its vertices $u,v$, we denote by $d_G(u,v)$ the {\it distance} between $u$ and $v$ (i.e., the length of a shortest path between $u$ and $v$ in $G$).\\




A perfect {\it $r$-ary tree} is a rooted tree where every non-leaf vertex has exactly $r$ children, and all leaf nodes are at the same distance from the root. {\it The depth} of a rooted tree is the longest distance between its root and a leaf.

Let us call an induced subgraph $H$ of $G$ {\it pendant}, if every vertex of $H$ having degree at least $2$ has no neighbors outside $H$.\\ 







Fix $a\in\mathbb{N}$.
Let $T$ be a rooted tree of depth $d$. For $v\in V(T)$, let $T_v$ be a subtree rooted in a vertex $v$ of $T$ and induced on the set of all descendants of $v$ (children, children of their children, etc.)  and $v$ itself. A rooted tree $T^-_{a}$ is obtained from $T$ in the following $d$-step procedure. 

In step 1, consider vertices of $T$ at distance $d-1$ from the root. If such a vertex has more than $a$ children, remove all but $a$ of them. Denote the obtained graph by $T^1$.

Suppose $i\leq d-1$ many steps of the procedure have been completed. In step $i+1$, consider, one by one, every vertex of $T^i$ at distance $d-i-1$ from the root. For every such vertex $v$, consider the set $W_v$ of its children. Divide the set of trees $T^i_w$, $w\in W_v$, into isomorphism classes (of rooted trees). For every class, if its cardinality is greater than $a$, remove all but $a$ trees of this class from the tree. Denote the obtained graph by $T^{i+1}$.
Set $T^-_a=T^d$.

We say that two rooted trees $T_1,T_2$ are {\it $a$-isomorphic}, if $(T_1)^-_a\cong (T_2)^-_a$ (where by $\cong$ we denote the isomorphism of rooted trees). We say that $T$ is {\it $a$-trivial}, if $T$ is $a$-isomorphic to a perfect $a$-ary tree.\\


Let $C$ be a {\it rooted unicyclic graph of depth $d$} (it contains exactly one cycle and one vertex called the root, and the largest distance between the root and another vertex equals $d$) with root $R$ and 
cycle $C^*$. For $v\in V(C)$, let $T_v$ be a subtree of $C$ rooted in a vertex $v$ of $C$ and induced on the set of all {\it descendants} of $v$ ($u\neq v$ is a descendant of $v$ if any shortest path from $R$ to $u$ contains $v$, and any its vertex that follows after $v$ belongs to neither $C^*$ nor the shortest path between $R$ and $C^*$)  and $v$ itself. For every vertex $v$ either from $C^*$ or from the shortest path between $C^*$ and $R$, replace $T_v$ with $(T_v)^-_a$ (and preserve roots) and denote the obtained graph by $C^-_a$.

Let us call $C$ {\it perfect $a$-ary} if, for every vertex $v$ either from $C^*$ or from the shortest path between $C^*$ and $R$, $T_v$ is a perfect $a$-ary tree of depth $d-d_{C}(R,v)$.

We say that two rooted unicyclic graphs $C_1,C_2$ are {\it $a$-isomorphic}, if $(C_1)^-_a\cong (C_2)^-_a$ (the isomorphism preserves the root). We say that $C$ is {\it $a$-trivial}, if $C$ is $a$-isomorphic to a perfect $a$-ary unicyclic graph.\\

\subsection{The proof}
\label{win_strategy}

Consider graphs $H_1,H_2$ on vertex sets $[n_1]$ and $[n_2]$ respectively such that 
\begin{itemize}
\item their minimum degrees are at least $m$;
\item $H_1|_{[N_0]}= H_2|_{[N_0]}$;
\item $H_1$, $H_2$ have properties ${\sf Q1}$, ${\sf Q2}$, ${\sf Q3}$.
\end{itemize}

Without loss of generality, in the $(m-2)$-pebble game on $H_1$ and $H_2$, Spoiler chooses a vertex $x_1$ in $H_1$ in the first round. Duplicator responds with a vertex $y_1$ chosen by the following rules.

If $x_1\in[N_0]$ and $d_{H_1|_{[N_0]}}(x_1,[n_0])\leq 2^R$, then $y_1=x_1$.

If $d:=d_{H_1}(x_1,[n_0])\leq 2^R$ and either $x_1\notin[N_0]$ or $d_{H_1|_{[N_0]}}(x_1,[n_0])>2^R$, then find a shortest path $P$ between $x_1$ and a vertex from $[n_0]$. Let $v_0\ldots u$ be the longest subpath of $P$ that starts in $v_0\in[n_0]$ and never goes outside $[N_0]$. Let $\mathcal{B}_1$ be the ball of radius $2^R-d_{H_1}(u,x_1)$ in $H_1|_{[N_0]}$ with center in $u$. In $H_2$, there exists a vertex $y_1\notin\mathcal{B}_1$ such that the shortest path between $y_1$ and $u$ has length exactly $d_0=d_{H_1}(x_1,u)$ and all its inner vertices are outside $\mathcal{B}_1$. Indeed, by ${\sf Q3}$, $u$ has at least $m$ neighbors in $H_2$ outside $[N_0]$. Let $v$ be one of them. Let $v v_1\ldots v_{d_0-1}$ be an arbitrary simple path in $H_2$. If it meets $\mathcal{B}_1$, then either there is a cycle with length at most $2^R$ at distance at most $2^R-d_0\leq 2^R$ from $[n_0]$, or it meets $[n_0]$. In the latter case, we get a path joining two vertices of $[n_0]$ of length at most $2^m+d_0\leq 2^{m+1}$ and having the inner vertex $v$ outside $[N_0]$. This contradicts the property ${\sf Q1}$.

If $d>2^R$ and, for some $b$,  in $H_1$ there is a $b$-cycle $C_1$ inside the ball with radius $2^R$ and center in $x_1$, then, by ${\sf Q2}$, in $H_2$, there exists a $b$-cycle $C_2$ with all vertices outside $[N_0]$ and a vertex $y_1$ such that $d(y_1,C_2)=d(x_1,C_1)$. By the property ${\sf Q1}$, $d_{H_2}(y_1,[n_0])>2^R$.

Finally, if $d>2^R$ and, in $H_1$, there are no cycles inside the ball with radius $2^R$ and center in $x_1$, then let $v\notin[N_0]$ be a neighbor of a vertex $u$ from $[n_0]$. Find a path $uv\ldots y_1$ of length $2^R+1$. As above, by ${\sf Q1}$,  $d_{H_2}(y_1,[n_0])=2^R+1$ and, in $H_2$, there are no cycles inside the ball with radius $2^R$ and center in $y_1$.\\

Let us assume that $r\in[R-1]$ rounds are played. Without loss of generality, we may assume that all $m-2$ pairs of pebbles are placed on some vertices of $H_1$ and $H_2$. Let $x^r_i$ in $H_1$ and $y^r_i$ in $H_2$, $i\in[m-2]$, be the vertices occupied by the $i$-th pair of pebbles.
For $i\in[m-2]$, denote by $\mathrm{rd}_i^r$ the last round in which $x_i^r$ was pebbled. Moreover, assume that, for every $i\in[m-2]$, one of the following possibilities holds (it is clear that it holds for $r=1$): either $x^r_i$, $y^r_i$ are equal, belong to $[N_0]$ and are close to $[n_0]$ (this property is denoted by $1_i^r$ below), or both $x^r_i$ and $y^r_i$ are far from $[n_0]$ and their neighborhoods are trivial (the property $2_i^r$), or neighborhoods of $x^r_i$ and $y^r_i$ have equal intersections with $[N_0]$, and the deletion of these intersections transforms these neighborhoods into forests of trivial trees  (the property $3_i^r$). More formally,

\begin{itemize}
\item[$1_i^r$] $x_i^r=y_i^r\in[N_0]$, $d_{H_1|_{[N_0]}}(x_i^r,[n_0])\leq 2^{R+1-\mathrm{rd}_i^r}$;

\item[$2_i^r$] in $H_1$, there exists a pendant subgraph $\mathcal{B}^1_i$ which is either an $(m-1)$-trivial rooted tree of depth $2^{R+1-\mathrm{rd}_i^r}$ with root $x_i^r$ or an $(m-2)$-trivial unicyclic graph of depth $2^{R+1-\mathrm{rd}_i^r}$ with root $x_i^r$, such that $\mathcal{B}_i^1$ does not share vertices with $[n_0]$, and the same (existence of $\mathcal{B}^2_i$) applies for $y_i^r$ and $H_2$;

\item[$3_i^r$] there exists a subset $V_i\subset[N_0]$, a vertex $u_i^r\in V_i$, $(m-1)$-trivial rooted trees $T^1_i\subset H_1$, $T_i^2\subset H_2$ of depth $2^{R+1-\mathrm{rd}_i^r}$ and subgraphs $\mathcal{B}^1_i\subset H_1$, $\mathcal{B}^2_i\subset H_2$ such that

--- $x_i^r\neq u_i^r$, $y_i^r\neq u_i^r$ are the roots of $T^1_i$, $T^2_i$ respectively;

--- for $\lambda\in\{1,2\}$, $V(T_i^{\lambda})\cap V_i=\{u_i^r\}$;

--- $d_{T^1_i}(x_i^r,u_i^r)=d_{T^2_i}(y_i^r,u_i^r)$;

--- $v\in V_i$ if and only if $d_{H_1|_{[N_0]}}(v,u_i^r)\leq 2^{R+1-\mathrm{rd}_i^r}-d_{H_1}(x_i^r,u_i^r)$;

--- for $\lambda\in\{1,2\}$ and every non-leaf vertex $v\neq u_i^r$ of $T^{\lambda}_i$, $\mathrm{deg}_{T^{\lambda}_i}(v)=\mathrm{deg}_{H_{\lambda}}(v)$;

--- for $\lambda\in\{1,2\}$, $\mathrm{deg}_{H_{\lambda}}(u_i^r)=\mathrm{deg}_{T^{\lambda}_i}(u_i^r)+\mathrm{deg}_{H_1|_{V_i}}(u_i^r)$, i.e. $u_i^r$ does not lie on any edge other than edges from $T^{\lambda}_i$ and $H_1|_{V_i}$, and these two sets of edges are disjoint;

--- for $\lambda\in\{1,2\}$, $\mathcal{B}^{\lambda}_i=T^{\lambda}_i\cup H_{\lambda}|_{V_i}\cup\mathcal{F}^{\lambda}_i$ is the ball in $H_{\lambda}$ of radius $2^{R+1-\mathrm{rd}_i^r}$ and with center in $x_i^r$ or $y_i^r$ (for $\lambda=1$ or $\lambda=2$ resp.), $\mathcal{F}^{\lambda}_i$ is a forest of $(m-1)$-trivial rooted trees having roots in $V_i$ and sharing no other vertices with $V_i$ and $T^{\lambda}_i$.

\end{itemize}

In the case when either $2_i^r$ or $3_i^r$ holds, assume also that the following condition ${\sf IS}_i^r$ is satisfied (it roughly says that there exists an isomorphism between certain `representative' induced subgraphs of neighborhoods of $x_i^r$ and $y_i^r$ that preserves vertices at distance at most $2^{m+1-\mathrm{rd}_i^r}$ from $x_i^r$ and $y_i^r$ that were pebbled before the round $\mathrm{rd}_i^r$). 

\begin{itemize}

\item[${\sf IS}_i^r$] Let $\mathcal{Y}$ be the set of all $j\in[m-2]$ such that $\mathrm{rd}_j^r<\mathrm{rd}_i^r $ and $d(x_i^r,x_j^r)\leq 2^{m+1-\mathrm{rd}_i^r}$. Recall that $\mathcal{B}_i^1$ is the ball in $H_1$ with radius $2^{R+1-\mathrm{rd}_i^r}$ and center in $x_i^r$, and  $\mathcal{B}_i^2$ is the ball in $H_2$ with radius $2^{R+1-\mathrm{rd}_i^r}$ and center in $y_i^r$.

{\it If $2_i^r$ holds}, then there exist rooted graphs (either perfect $(m-1)$-ary trees or perfect $(m-2)$-ary unicyclic graphs) $\left(\mathcal{B}_i^1\right)^*\subset\mathcal{B}_i^1$, $\left(\mathcal{B}_i^2\right)^*\subset\mathcal{B}_i^2$ (here, the {\it induced} subgraph relations preserve roots) such that 

\begin{itemize}

\item $\mathcal{B}_i^1$ and $\left(\mathcal{B}_i^1\right)^*$, $\mathcal{B}_i^2$ and $\left(\mathcal{B}_i^2\right)^*$ are either $(m-1)$-isomorphic (in case of trees), or $(m-2)$-isomorphic (in case of unicyclic graphs), 

\item there exists an isomorphism $f:\left(\mathcal{B}_i^1\right)^*\to\left(\mathcal{B}_i^2\right)^*$ such that 

--- $f(x_j^r)=y_j^r$, $j\in\mathcal{Y}\cup\{i\}$,

--- for every $j\in\mathcal{Y}$ with $1_j^r$, either $x_i^r\in[N_0]$, $d_{H_1|_{[N_0]}}(x_j^r,x_i^r)\leq 2^{R-\mathrm{rd}_j^r}$ and $x_i^r=y_i^r$, or a shortest path from $x_j^r$ to $x_i^r$ has vertices outside $[N_0]$, leaves $[N_0]$ at the first time at vertex $u_j^r$, any shortest path from $y_j^r$ to $y_i^r$ leaves $[N_0]$ at the first time also at $u_j^r$ and $f(u_j^r)=u_j^r$.

\end{itemize}

{\it If $3_i^r$ holds}, then there exist perfect $(m-1)$-ary rooted trees $\left(R^{\lambda}_i\right)^*\subset R^{\lambda}_i$, $(F^{\lambda})^*\subset F^{\lambda}$ (the {\it induced} subgraph relations preserve roots) for all trees $F^{\lambda}$ from $\mathcal{F}^{\lambda}_i$ such that 

\begin{itemize}

\item $R_i^{\lambda}$ and $\left(R_i^{\lambda}\right)^*$, $F^{\lambda}$ and $(F^{\lambda})^*$ , $F^{\lambda}\in\mathcal{F}^{\lambda}_i$, are $(m-1)$-isomorphic, 

\item there exists an isomorphism  $f:\left(R^1_i\right)^*\cup\left(\mathcal{F}^1_i\right)^*\cup H_1|_{V_i}\to\left(R^2_i\right)^*\cup\left(\mathcal{F}^2_i\right)^*\cup H_2|_{V_i}$ (here, $\left(\mathcal{F}^{\lambda}_i\right)^*=\bigsqcup_{F^{\lambda}\in\mathcal{F}^{\lambda}_i}(F^{\lambda})^*$, where hereinafter $\sqcup$ denotes the disjoint union of sets) such that 

--- $f|_{\left(R^1_i\right)^*}:\left(R^1_i\right)^*\to \left(R^2_i\right)^*$ and $f|_{\left(\mathcal{F}^1_i\right)^*}:\left(\mathcal{F}^1_i\right)^*\to \left(\mathcal{F}^2_i\right)^*$ preserve roots,

--- $f(x_j^r)=y_j^r$, $j\in\mathcal{Y}\cup\{i\}$, 

--- $f(v)=v$ for $v\in V_i$.

\end{itemize}

Finally (in both cases: $2_i^r$ and $3_i^r$), let, for every $j\in[m-2]$ such that $\mathrm{rd}^r_j<\mathrm{rd}^r_i$, either $d(x^r_i,x^r_j)>2^{R+1-\mathrm{rd}_i^r}$ and $d(y^r_i,y^r_j)>2^{R+1-\mathrm{rd}_i^r}$, or $d(x_i^r,x_j^r)=d(y_i^r,y_j^r)\leq 2^{R+1-\mathrm{rd}_i^r}$.

\end{itemize}















Without loss of generality assume that, in round $r+1$, Spoiler moves the $(m-2)$th pebble from $x_{m-2}^r$ to $x_{m-2}^{r+1}$ in $H_1$. Set $x_i^r=x_i^{r+1}$, $y_i^r=y_i^{r+1}$ for all $i\in[m-3]$. Clearly, showing that there exists a vertex $y_{m-2}^{r+1}$ in $H_2$ such the above (for every $i\in[m-2]$, one of the following three possibilities: either $1_i^{r+1}$, or $2_i^{r+1}$ and $\mathrm{IS}_i^{r+1}$, or $3_i^{r+1}$ and $\mathrm{IS}_i^{r+1}$) also holds for the round $r+1$, finishes the proof of Lemma~\ref{claim_m-game}. Indeed, if this is true, then by induction, we get that, in the last round $R$, for every $i\in[m-2]$, one of the mentioned three possibilities holds. Then, fix distinct $i,j\in[m-2]$. If $x_i^R$ and $x_j^R$ both have the property $1_i^R$, then $(x_i^R\sim x_j^R)\Leftrightarrow(y_i^R\sim y_j^R)$ since $x_i^R=y_i^R$, $x_j^R=y_j^R$ are inside the same induced subgraph $H_1|_{[N_0]}=H_2|_{[N_0]}$ of both $H_1$ and $H_2$. Assume without loss of generality that $\mathrm{rd}_i^R>\mathrm{rd}_j^R$. If $1_i^R$ holds but $1_j^R$ does not hold, then either $d(x_j^R,[n_0])>2^{R+1-\mathrm{rd}_j^R}$ while $d(x_i^R,[n_0])\leq 2^{R+1-\mathrm{rd}_i^R}$  (and then $x_i^R\nsim x_j^R$, $y_i^R\nsim y_j^R$), or the property $3_j^R$ holds. In the latter case, either $x_i^R=y_i^R$ does not belong to $V_j$, and then $x_i^R\nsim x_j^R$, $y_i^R\nsim y_j^R$, or $x_i^R=y_i^R\in V_i$, and then $(x_i^R\sim x_j^R)\Leftrightarrow(y_i^R\sim y_j^R)$ since, by the definition of $3_j^R$, $d_{T_j^1}(x_j^R,u_j^R)=d_{T_j^2}(y_j^R,u_j^R)$. If either $2_i^R$ or $3_i^R$ holds, then $(x_i^R\sim x_j^R)\Leftrightarrow(y_i^R\sim y_j^R)$ due to the last condition in the definition of the property  $\mathrm{IS}_i^{R}$: either $d(x^R_i,x^rRj)>2^{R+1-\mathrm{rd}_i^r}$ and $d(y^R_i,y^R_j)>2^{R+1-\mathrm{rd}_i^r}$, or $d(x_i^R,x_j^R)=d(y_i^R,y_j^R)\leq 2^{R+1-\mathrm{rd}_i^r}$.\\

 Let us now prove the step of induction.

\begin{enumerate}

\item If $x^{r+1}_{m-2}\in[N_0]$ and $d_{H_1|_{[N_0]}}(x^{r+1}_{m-2},[n_0])\leq 2^{R-r}$, then $y_{m-2}^{r+1}=x_{m-2}^{r+1}$. So, $1_{m-2}^{r+1}$ holds. 

It remains to prove that, for $j\in[m-3]$, either $d(x_j^{r+1},x_{m-2}^{r+1})>2^{R-r}$ and $d(y_j^{r+1},y_{m-2}^{r+1})>2^{R-r}$, or $d(x_j^{r+1},x_{m-2}^{r+1})=d(y_j^{r+1},y_{m-2}^{r+1})\leq 2^{R-r}$.\\

$\bullet\quad$ Assume that $d(x_j^{r+1},x_{m-2}^{r+1})>2^{R-r}$. 

If $1_j^{r+1}$ holds, then $d_{H_2|_{[N_0]}}(y_j^{r+1},y_{m-2}^{r+1})=d_{H_1|_{[N_0]}}(x_j^{r+1},x_{m-2}^{r+1})>2^{R-r}$. 
If, in $H_2$, there exists a path between $y_j^{r+1}$ and $y_{m-2}^{r+1}$ having length at most $2^{R-r}$, then it has a vertex outside $[N_0]$, that contradicts ${\sf Q1}$ (indeed, $d_{H_1|_{[N_0]}}(y_j^{r+1},y_{m-2}^{r+1})\leq d_{H_1|_{[N_0]}}(y_j^{r+1},[n_0])+d_{H_1|_{[N_0]}}(y_{m-2}^{r+1},[n_0])\leq 2^{R-r}+2^{R+1-\mathrm{rd}_j^{r+1}}$). Therefore, $d_{H_2}(y_j^{r+1},y_{m-2}^{r+1})>2^{R-r}$.

If $2_j^{r+1}$ holds, then $\mathcal{B}^2_j$ does not contain any vertex of $[n_0]$. Therefore, $d(y_j^{r+1},y_{m-2}^{r+1})\geq d(y_j^{r+1},[n_0])-d(y_{m-2}^{r+1},[n_0])>2^{R-r}$. 

If $3_j^{r+1}$ holds and $x_{m-2}^{r+1}\in V_j$, then 
\begin{equation}
\begin{split}
d(y_j^{r+1},y_{m-2}^{r+1})&=d(y_j^{r+1},u_j^{r+1})+d(u_j^{r+1},y_{m-2}^{r+1})\\
&=d(x_j^{r+1},u_j^{r+1})+d(u_j^{r+1},x_{m-2}^{r+1})\\
&=d(x_j^{r+1},x_{m-2}^{r+1})>2^{R-r}.
\end{split}
\label{from_x_to_y}
\end{equation}
Finally, if $x_{m-2}^{r+1}\notin V_j$, then either $y_{m-2}^{r+1}$ does not belong to $\mathcal{B}^2_j$ and, therefore, $d(y_j^{r+1},y_{m-2}^{r+1})\geq 2^{R+1-\mathrm{rd}_j^{r+1}}>2^{R-r}$, or $y_{m-2}^{r+1}\in V(\mathcal{B}^2_j)$. Let $d(y_j^{r+1},y_{m-2}^{r+1})\leq 2^{R-r}$. If $y_{m-2}^{r+1}$ belongs to the `tree part' of $\mathcal{B}^2_j$, then there exists a path $y_{m-2}^{r+1}\ldots y_j^{r+1}\ldots u_{j}^{r+1}\ldots[n_0]$ of length at most $2^{R-r}+3\cdot 2^{R+1-\mathrm{rd}_j^{r+1}}$ with at least one vertex outside $[N_0]$. Together with the condition $d_{H_2|_[N_0]}(y_{m-2}^{r+1},[n_0])$, it contradicts the property ${\sf Q1}$. If $y_{m-2}^{r+1}$ belongs to the `forest part' of $\mathcal{B}^2_j$, then denoting by $f$ the root of the tree that $y_{m-2}^{r+1}$ belongs to, we get $d(y_j^{r+1},y_{m-2}^{r+1})=d(y_{m-2}^{r+1},f)+d(f,u_j^{r+1})+d(u_j^{r+1},y_j^{r+1})=d(x_j^{r+1},x_{m-2}^{r+1})$ --- a contradiction.\\


$\bullet\quad$ Assume that $d(x_j^{r+1},x_{m-2}^{r+1})\leq 2^{R-r}$. 

Then, $d(x_j^{r+1},[n_0])\leq 2^{R-r+1}\leq 2^{R-\mathrm{rd}_j^{r+1}+1}$. Also, if $x_j^{r+1}\in[N_0]$ and $d_{H_1|_{[N_0]}}(x_j^{r+1},[n_0])\leq 2^{R-\mathrm{rd}_j^{r+1}+1}$, then $x_j^{r+1}=y_j^{r+1}$ and, therefore, we get $d(y_j^{r+1},y_{m-2}^{r+1})=d(x_j^{r+1},x_{m-2}^{r+1})$ due to the property ${\sf Q1}$. 
If $x_j^{r+1}\notin[N_0]$ or $d_{H_1|_{[N_0]}}(x_j^{r+1},[n_0])>2^{R-\mathrm{rd}_j^{r+1}+1}$, then $3_j^{r+1}$ holds. If $x_{m-2}^{r+1}\in V_j$, then all the equalities from (\ref{from_x_to_y}) hold as well. 
If $x_{m-2}^{r+1}\notin V_j$, then $x_{m-2}^{r+1}$ belongs either to `the tree' or to `the forest part' of $\mathcal{B}_j^1$. But this contradicts ${\sf Q1}$, since we get two simple paths connecting $x_{m-2}^{r+1}$ with $[n_0]$ of lengths at most $2^{R-\mathrm{rd}_j^{r+1}+1}+2^{R-r}$ such that exactly one of them has vertices outside $[N_0]$.\\


\item Let $d(x_{m-2}^{r+1},[n_0])\leq 2^{R-r}$ and either $x_{m-2}^{r+1}\notin[N_0]$ or $d_{H_1|_{[N_0]}}(x_{m-2}^{r+1},[n_0])>2^{R-r}$.\\

Let $\mathcal{J}$ be the set of all $j\in[m-3]$ such that $d(x_j^{r+1},x_{m-2}^{r+1})\leq 2^{R-r}$. Divide the set $\mathcal{J}$ in the following way $\mathcal{J}=\mathcal{J}_1\sqcup\mathcal{J}_3$: $j\in\mathcal{J}_1$ if and only of $1_j^{r+1}$ holds and $j\in\mathcal{J}_3$ if and only if $3_j^{r+1}$ holds.\\

$\bullet\quad$ Assume first that $\mathcal{J}_3$ is empty.


The way how $y_{m-2}^{r+1}$ is chosen is similar to the way how $y_1$ is chosen in the first round. The only difference is that we should find a vertex which is far enough from all the chosen vertices $y_j^{r+1}$ with $3_j^{r+1}$. 

Recall that $u_{m-2}^{r+1}$ is the vertex of $[N_0]$ after which a shortest path joining $[n_0]$ with $x_{m-2}^{r+1}$ leaves the set $[N_0]$ at the first time. The set $V_{m-2}$ induces the ball of radius $2^{R-r}-d_{H_1}(u_{m-2}^{r+1},x_{m-2}^{r+1})$ in $H_1|_{[N_0]}$ with center in $u_{m-2}^{r+1}$. 

Find the set $\mathcal{I}$ of all $j\in[m-3]$ such that $3_j^{r+1}$ holds and $u_j^{r+1}=u_{m-2}^{r+1}$. For $j\in\mathcal{I}$, let $v_j$ be the neighbor of $u_j^{r+1}$ on the path between $u_j^{r+1}$ and $y_j^{r+1}$ in $R^2_j$. Due to the properties ${\sf Q1}$ and ${\sf Q3}$, in $H_2$, there exists a vertex $y_{m-2}^{r+1}\notin V_{m-2}$ such that the shortest path between $y_{m-2}^{r+1}$ and $u_{m-2}^{r+1}$ has length exactly $d_0=d_{H_1}(x_{m-2}^{r+1},u_{m-2}^{r+1})$, all its inner vertices are outside $V_{m-2}$ and the neighbor of $u_{m-2}^{r+1}$ in this path does not belong to $\{v_j,j\in\mathcal{I}\}$. 

The properties ${\sf Q1}$ and ${\sf Q3}$ imply $3_{m-2}^{r+1}$. For $j\in\mathcal{J}_1$, $x_j^{r+1}=y_j^{r+1}\in V_{m-2}$ since, otherwise, we get a contradiction with the property ${\sf Q1}$ in the usual way. Therefore, $d(y_j^{r+1},y_{m-2}^{r+1})=d(x_j^{r+1},x_{m-2}^{r+1})$.

It remains to prove that there is no $j\in[m-3]\setminus\mathcal{J}_1$ such that $d(y_j^{r+1},y_{m-2}^{r+1})\leq 2^{R-r}$. Assume that such a $y_j^{r+1}$, $j\in[m-3]\setminus\mathcal{J}_1$, exists. Clearly, $y_j^{r+1}\notin V_{m-2}$ since, otherwise, $d_{H_2|_{[N_0]}}(y_j^{r+1},[n_0])< 2^{R-r}<2^{R+1-\mathrm{rd}_j^{r+1}}$ and, therefore, $x_j^{r+1}=y_j^{r+1}$. This contradicts the assumption that $d(x_j^{r+1},x_{m-2}^{r+1})>2^{R-r}$ since
\begin{equation}
\begin{split}
d(x_j^{r+1},x_{m-2}^{r+1})&=d(x_j^{r+1},u_{m-2}^{r+1})+d(u_{m-2}^{r+1},x_{m-2}^{r+1})\\
&=d(y_j^{r+1},u_{m-2}^{r+1})+d(u_{m-2}^{r+1},y_{m-2}^{r+1})\\
&=d(y_j^{r+1},y_{m-2}^{r+1})\leq 2^{R-r}.
\end{split}
\label{from_y_to_x}
\end{equation}
If $y_j^{r+1}\in[N_0]\setminus V_{m-2}$ and $d_{H_2|_{[N_0]}}(y_j^{r+1},[n_0])\leq 2^{R+1-\mathrm{rd}_j^{r+1}}$, then $y_j^{r+1}=x_j^{r+1}$. By ${\sf Q1}$, all the equalities from (\ref{from_y_to_x}) hold as well. Since $j\notin\mathcal{J}_1$, we get $d(y_j^{r+1},y_{m-2}^{r+1})>2^{R-r}$ --- a contradiction. Thus, $3_j^{r+1}$ holds. Let $u_j^{r+1}\neq u_{m-2}^{r+1}$. Since $d(y_j^{r+1},y_{m-2}^{r+1})\leq 2^{R-r}$, the shortest path between $y_{m-2}^{r+1}$ and $y_j^{r+1}$ goes through $u_j^{r+1}$ and $u_{m-2}^{r+1}$ (otherwise, we get a contradiction with ${\sf Q1}$). Therefore, $u_{m-2}^{r+1}\in V_j$ and

\begin{align*}
d(x_j^{r+1},x_{m-2}^{r+1})&\leq d(x_j^{r+1},u_j^{r+1})+d(u_j^{r+1},u_{m-2}^{r+1})+d(u_{m-2}^{r+1},x_{m-2}^{r+1})\\
&=d(y_j^{r+1},u_j^{r+1})+d(u_j^{r+1},u_{m-2}^{r+1})+d(u_{m-2}^{r+1},y_{m-2}^{r+1})\\
&=d(y_j^{r+1},y_{m-2}^{r+1})\leq 2^{R-r}
\end{align*}

--- a contradiction. Finally, let $u_j^{r+1}=u_{m-2}^{r+1}$. Since, by the construction, the neighbors of $u_{m-2}^{r+1}$ in the paths between $u_{m-2}^{r+1}$ and $y_j^{r+1}$, $y_{m-2}^{r+1}$ are distinct, then, due to the property ${\sf Q1}$, the shortest path between $y_j^{r+1}$ and $y_{m-2}^{r+1}$ is the union of the paths between $u_j^{r+1},y_j^{r+1}$ and $u_j^{r+1},y_{m-2}^{r+1}$. Then,
\begin{align*}
d(x_j^{r+1},x_{m-2}^{r+1})&\leq d(x_j^{r+1},u_j^{r+1})+d(u_{m-2}^{r+1},x_{m-2}^{r+1})\\
&=d(y_j^{r+1},u_j^{r+1})+d(u_{m-2}^{r+1},y_{m-2}^{r+1})\\
&=d(y_j^{r+1},y_{m-2}^{r+1})\leq 2^{R-r}
\end{align*}
--- a contradiction.\\

$\bullet\quad$ Now, let $\mathcal{J}_3\neq\varnothing$. Let $j\in\mathcal{J}_3$ be such that $\mathrm{rd}_j^{r+1}$ is the maximum in $\{\mathrm{rd}_i^{r+1},\,i\in\mathcal{J}_3\}$. 


Due to the property ${\sf Q1}$, either $x_{m-2}^{r+1}\in V(R^1_j)$, or $x_{m-2}^{r+1}\in V(\mathcal{F}^1_j)$. In the latter case, $u_{m-2}^{r+1}$ is the root of a tree from $\mathcal{F}^1_j$ that contains $x_{m-2}^{r+1}$. However, we will assume that $x_{m-2}^{r+1}\in V(R^1_j)$ to avoid new notations (this does not change the below arguments anyhow). Find all $i\in\mathcal{J}_3$ such that $u_i^{r+1}=u_{m-2}^{r+1}$. Let $\mathcal{J}_3^0$ be the set of all such $i$. Let $R^1_*$ be the tree obtained from $R^1_{m-2}$ by moving the root to $u_{m-2}^{r+1}$, and removing all vertices $v\notin\{u_{m-2}^{r+1},\,x_i^{r+1},i\in\mathcal{J}_3^0\}$ such that, in the obtained tree, there is no descendant of $v$ equal to any of $x_i^{r+1}$, $i\in\mathcal{J}_3^0$. By the condition ${\sf IS}_j^{r+1}$, $R^2_j$ contains $R^2_*\cong R^1_*$ rooted in $u_{m-2}^{r+1}$ and there exists an isomorphism of rooted trees $f:R^1_*\to R^2_*$ such that $f(x_i^{r+1})=y_i^{r+1}$, $i\in\mathcal{J}_3^0$. 

If $x_{m-2}^{r+1}\in V(R^1_*)$, then set $f(x_{m-2}^{r+1})=y_{m-2}^{r+1}$. If $x_{m-2}^{r+1}\notin V(R^1_*)$, then find the closest vertex $v_1$ of $R^1_*$ to $x_{m-2}^{r+1}$ in $R^1_j$. Let $v_2=f(v_1)$. Find a neighbor $v\notin V(R^2_*)\cup\{y_i^{r+1},\,i\in[m-3]\}$ of $v_2$ in $R^2_j$ such that, in the tree obtained from $R^2_j$ by moving the root to $u_{m-2}^{r+1}$, there is no descendant of $v$ among $y_i^{r+1}$, $i\in[m-3]$ (the existence of such a neighbor follows from the fact that the minimum degree of $H_2$ is at least $m$). Let $v_2 v\ldots y_{m-2}^{r+1}$ be a path in $R^2_j$ with length $d_{R^1_j}(v_1,x_{m-2}^{r+1})$.



It is clear that $3_{m-2}^{r+1}$ holds. To prove ${\sf IS}_{m-2}^{r+1}$, it remains to show that, for all $i\notin\mathcal{J}$, $d(y_i^{r+1},y_{m-2}^{r+1})>2^{R-r}$ and, for all $i\in\mathcal{J}_1$, $d(y_i^{r+1},y_{m-2}^{r+1})=d(x_i^{r+1},x_{m-2}^{r+1})$.

First, let $i\notin\mathcal{J}$. If $y_i^{r+1}\notin V(\mathcal{B}_{m-2}^2)$, then $d(y_{m-2}^{r+1},y_i^{r+1})>2^{R-r}$. Let $y_i\in V(\mathcal{B}_{m-2}^2)$, then, by the construction, $x_i^{r+1}\in V(\mathcal{B}_{m-2}^1)$ and $i\in\mathcal{J}$ --- a contradiction.

Finally, let $i\in\mathcal{J}_1$. Then, $y_i^{r+1}=x_i^{r+1}\in V_{m-2}$, since, otherwise, ${\sf Q1}$ does not hold. Therefore, 
\begin{equation*}
\begin{split}
d(y_{m-2}^{r+1},y_i^{r+1})&=
d(y_{m-2}^{r+1},u_{m-2}^{r+1})+d(u_{m-2}^{r+1},y_i^{r+1})\\
&=d(x_{m-2}^{r+1},u_{m-2}^{r+1})+d(u_{m-2}^{r+1},x_i^{r+1})\\
&=d(x_{m-2}^{r+1},x_i^{r+1}).
\end{split}
\end{equation*}


\item Let $d(x_{m-2}^{r+1},[n_0])> 2^{R-r}$. Let $\mathcal{J}$ be the set of all $j\in[m-3]$ such that $d(x_j^{r+1},x_{m-2}^{r+1})\leq 2^{R-r}$. Divide the set $\mathcal{J}$ in the following way $\mathcal{J}=\mathcal{J}_1\sqcup\mathcal{J}_2\sqcup\mathcal{J}_3$: $j\in\mathcal{J}_{\chi}$ if and only of $\chi_j^{r+1}$ holds, $\chi\in\{1,2,3\}$. \\

$\bullet\quad$ Assume first that $\mathcal{J}$ is empty. \\

If, in $H_1$, for some $b$, there is a $b$-cycle $C^1$ inside the ball with radius $2^{R-r}$ and center in $x_{m-2}^{r+1}$, then, by ${\sf Q1}$ and ${\sf Q2}$, in $H_2$, there exists a vertex $y_{m-2}^{r+1}$ and a $b$-cycle $C^2$ such that $d(y_{m-2}^{r+1},C^2)=d(x_{m-2}^{r+1},C^1)$ and $d(y_{m-2}^{r+1},y_j^{r+1})>2^{R-r}$ for all $j\in[m-3]$.\\

If, in $H_1$, there are no cycles inside the ball with radius $2^{R-r}$ and center in $x_{m-2}^{r+1}$, then fix a vertex $u\in[n_0]$ and find all its neighbors in $H_2$ outside $[N_0]$. By the property ${\sf Q3}$, there are at least $m$ such neighbors. Then, by the property ${\sf Q1}$, at least one of them ($v$) is such that any simple path of length $2^{R-r+1}$ that starts on $v$ and does not meet $u$ does not contain any of $y_j^{r+1}$, $j\in[m-3]$. Let $y_{m-2}^{r+1}$ be a vertex in the middle of one of such paths. Clearly, for every $j\in[m-3]$, $d(y_{m-2}^{r+1},y_j^{r+1})>2^{R-r}$ and, by the property ${\sf Q1}$, there are no cycles inside the ball with radius $2^{R-r}$ and center in $y_{m-2}^{r+1}$.\\

$\bullet\quad$ Let $\mathcal{J}_1\neq\varnothing$ and $\mathcal{J}_2\sqcup\mathcal{J}_3=\varnothing$.\\ 

Assume first that there is no $j\in\mathcal{J}_1$ such that a shortest path between $x_j^{r+1}$ and $x_{m-2}^{r+1}$ lies inside $[N_0]$. Let $j\in\mathcal{J}_1$. Let $P$ be a shortest path from $x_j^{r+1}$ to $x_{m-2}^{r+1}$ and let $u$ be the vertex from $[N_0]$ where the path leaves $[N_0]$ at the first time. Let $V_{m-2}$ be the set of all vertices of $[N_0]$ such that shortest paths between them and $u$ lie in $[N_0]$ and are not longer than $2^{R-r}-d(x_{m-2}^{r+1},u)$. Find all neighbors of $u$ in $H_2$ outside $[N_0]$. By the property ${\sf Q3}$, there are at least $m$ such neighbors. Then, by the property ${\sf Q1}$, at least one of them ($v$) is such that any simple path of length $2^{R-r+1}$ that starts on $v$ and does not meet $u$ does not contain any of $y_i^{r+1}$, $i\in[m-3]$. Let $y_{m-2}^{r+1}$ be a vertex of one of such paths such that $d(y_{m-2}^{r+1},u)=d(x_{m-2}^{r+1},u)$. Clearly $d(y_{m-2}^{r+1},y_j^{r+1})=d(x_{m-2}^{r+1},x_j^{r+1})$. Moreover, by the property ${\sf Q1}$, the ball $\mathcal{B}^2_{m-2}$ with radius $2^{R-r}$ and center in $y_{m-2}^{r+1}$ contains $V_{m-2}$ and $d(y_{m-2}^{r+1},V_{m-2})=d(x_{m-2}^{r+1},V_{m-2})$. By the property ${\sf Q1}$, any $x_i^{r+1}$, $i\in\mathcal{J}_1$, should be inside $V_{m-2}$, and $d(x_i^{r+1},x_{m-2}^{r+1})=d(x_{m-2}^{r+1},u)+d(u,x_i^{r+1})$. Therefore, for all such $i$, $d(y_i^{r+1},y_{m-2}^{r+1})=d(x_i^{r+1},x_{m-2}^{r+1})$. It remains to prove that, for all $i\in[m-3]\setminus\mathcal{J}_1$, $d(y_i^{r+1},y_{m-2}^{r+1})>2^{R-r}$. By the construction, any $y_i$ such that $d(y_i^{r+1},y_{m-2}^{r+1})\leq 2^{R-r}$ should be either inside $V_{m-2}$, or connected by a simple path $\tilde P$ with a vertex $\tilde u\in V_{m-2}$ such that the neighbor of $\tilde u$ in this path does not belong to $[N_0]$. In the first case, $1_i^{r+1}$ holds. In the latter case, $3_i^{r+1}$ holds. But this is impossible since $i\notin\mathcal{J}$.\\


Now, let there exist $j\in\mathcal{J}_1$ such that a shortest path between $x_j^{r+1}$ and $x_{m-2}^{r+1}$ lies inside $[N_0]$. Set $y_{m-2}^{r+1}=x_{m-2}^{r+1}$. Clearly, we only need to prove that there are no $i\in [m-3]\setminus\mathcal{J}_1$ such that $d(y_i^{r+1},y_{m-2}^{r+1})\leq 2^{R-r}$. Assume, that for some $i\in [m-3]\setminus\mathcal{J}_1$, $d(y_i^{r+1},y_{m-2}^{r+1})\leq 2^{R-r}$. If $1_i$ holds, then a shortest path between $x_i^{r+1}=y_i^{r+1}$ and $x_{m-2}^{r+1}=y_{m-2}^{r+1}$ has at least one vertex outside $[N_0]$ (otherwise, $d(x_i^{r+1},x_{m-2}^{r+1})=d(y_i^{r+1},y_{m-2}^{r+1})\leq 2^{R-r}$). But then, since $d(x_i^{r+1},[n_0])\leq 2^{R+1-\mathrm{rd}_i^{r+1}}$ and $d(x_j^{r+1},[n_0])\leq 2^{R+1-\mathrm{rd}_j^{r+1}}$, we get a contradiction with ${\sf Q1}$. If $2_i^{r+1}$ or $3_i^{r+1}$ holds, then consider a shortest path $P$ in $H_1$ from $[n_0]$ to $x_i^{r+1}$. Let $u_i^{r+1}$ be the first vertex of $[N_0]$ after which $P$ leaves $[N_0]$. Since $d(y_j^{r+1},y_i^{r+1})\leq d(y_j^{r+1},y_{m-2}^{r+1})+d(y_i^{r+1},y_{m-2}^{r+1})\leq 2^{R-r+1}$, we get that $d(x_j^{r+1},x_i^{r+1})\leq 2^{R-r+1}$ as well by the induction hypothesis. Then, if $2_i^{r+1}$ holds, a shortest path in $H_2$ between $[n_0]$ and $y_i^{r+1}$ leaves $[N_0]$ at the same vertex due to the property ${\sf IS}_i^{r+1}$. And the same applies if $3_i^{r+1}$ holds by the definition of this property. Moreover, a shortest path from $y_{m-2}^{r+1}$ to $y_i^{r+1}$ also leaves $[N_0]$ at $u_i^{r+1}$ due to the property ${\sf Q1}$. Since $x_{m-2}^{r+1}=y_{m-2}^{r+1}$ and due to ${\sf Q1}$, any shortest path between $x_{m-2}^{r+1}$ and $x_i^{r+1}$ meets $u_i^{r+1}$. Therefore, we get $d(y_{m-2}^{r+1},y_i^{r+1})=d(y_{m-2}^{r+1},u_i^{r+1})+d(u_i^{r+1},y_i^{r+1})=d(x_{m-2}^{r+1},u_i^{r+1})+d(u_i^{r+1},x_i^{r+1})=d(x_{m-2}^{r+1},x_i^{r+1})$. But then $i\in\mathcal{J}$ --- a contradiction.\\

$\bullet\quad$ Let $\mathcal{J}_2\sqcup\mathcal{J}_3\neq\varnothing$. Let $j\in\mathcal{J}_2\sqcup\mathcal{J}_3$ be such that $\mathrm{rd}_j^{r+1}$ is maximum in $\{\mathrm{rd}_i^{r+1},i\in\mathcal{J}_2\sqcup\mathcal{J}_3\}$.\\

Let $\mathcal{B}^1_{m-2}$ contain a cycle $C$.

Let $\mathcal{B}^1_*$ be obtained from $\mathcal{B}^1_{m-2}$ by making a vertex of $C$ the root and removing all vertices $v\notin V(C)\cup\{x_i^{r+1},i\in\mathcal{J}\}$ such that there is no descendant of $v$ equal to any of $x_i^{r+1}$, $i\in\mathcal{J}$. By the condition ${\sf IS}_j^{r+1}$, $\mathcal{B}^2_j$ contains $\mathcal{B}^2_*\cong \mathcal{B}^1_*$ and there exists an isomorphism $f:\mathcal{B}^1_*\to \mathcal{B}^2_*$ such that $f(x_i^{r+1})=y_i^{r+1}$, $i\in\mathcal{J}$. 

If $x_{m-2}^{r+1}\in V(\mathcal{B}^1_*)$, then set $f(x_{m-2}^{r+1})=y_{m-2}^{r+1}$. If $x_{m-2}^{r+1}\notin V(\mathcal{B}^1_*)$, then find the closest vertex $v_1$ of $\mathcal{B}^1_*$ to $x_{m-2}^{r+1}$ in $\mathcal{B}^1_{r+1}$. Let $v_2=f(v_1)$. Find a neighbor $v\notin V(\mathcal{B}^2_*)$ of $v_2$ in $\mathcal{B}^2_j$ such that there is no descendant of $v$ equal to $y_i^{r+1}$, $i\in[m-3]$ (the existence of such a neighbor follows from the minimum degree condition). Let $v_2 v\ldots y_{r+1}$ be a path in $\mathcal{B}^2_j$ with length $d_{\mathcal{B}^1_j}(v_1,x_{m-2}^{r+1})$. Clearly, $y_{m-2}^{r+1}$ is the desired vertex.\\

Finally, let $\mathcal{B}^1_{m-2}$ be a tree. 

Let there exist $i\in\mathcal{J}_1$ such that $\mathrm{rd}_i^{r+1}>\mathrm{rd}_j^{r+1}$. Then, $d(x_{m-2}^{r+1},x_i^{r+1})\leq 2^{R-r}$ and $1_i^{r+1}$ holds. Since $d(x_{m-2}^{r+1},x_j^{r+1})\leq 2^{R-r}$, we get that $d(x_i^{r+1},x_j^{r+1})\leq 2^{R+1-r}\leq 2^{R+1-\mathrm{rd}_i^{r+1}}$ and, therefore, $d(x_j^{r+1},[n_0])\leq 2^{m+2-\mathrm{rd}_i^{r+1}}\leq 2^{m+1-\mathrm{rd}_j^{r+1}}$. Then, $j\in\mathcal{J}_3$. Moreover, $d(x_i^{r+1},x_j^{r+1})\leq 2^{m+1-\mathrm{rd}_j^{r+1}}$. Therefore, due to the property ${\sf Q1}$, $x_i^{r+1}\in V_j$ and any shortest path from $x_i^{r+1}$ to $x_j^{r+1}$ leaves $[N_0]$ at the first time at $u_j^{r+1}$.

If there is no $i\in\mathcal{J}_1$ such that $\mathrm{rd}_i^{r+1}>\mathrm{rd}_j^{r+1}$, then $\mathrm{rd}_j^{r+1}$ is the maximum number in $\{\mathrm{rd}_i^{r+1},i\in\mathcal{J}\}$.

Let $\mathcal{B}^1_*$ be obtained from the rooted tree $\mathcal{B}^1_{m-2}$ by removing all vertices $v\notin\{x_i^{r+1},i\in\mathcal{J}\}$ such that there is no descendant of $v$ equal to any of $x_i^{r+1}$, $i\in\mathcal{J}$. By the condition ${\sf IS}_j^{r+1}$, $\mathcal{B}^2_j$ contains $\mathcal{B}^2_*\cong \mathcal{B}^1_*$ and there exists an isomorphism $f:\mathcal{B}^1_*\to \mathcal{B}^2_*$ such that $f(x_i^{r+1})=y_i^{r+1}$ for $i\in\mathcal{J}$ such that $\mathrm{rd}_i^{r+1}\leq\mathrm{rd}_j^{r+1}$.  If $([r]\setminus[j])\cap\mathcal{J}_1=\varnothing$, then this isomorphism preserves all the pebbled vertices but the last one. Let us show that, when there exists $i\in\mathcal{J}_1$ such that $\mathrm{rd}_i^{r+1}>\mathrm{rd}_j^{r+1}$, such an isomorphism also exists. From ${\sf IS}^{r+1}_j$ and $3^{r+1}_j$, we get that we may choose $f$ such that $f(u)=u$ for all $u\in V_j$. Since $x_i^{r+1}\in V_j$ for all $i\in\mathcal{J}$, $i>j$, we get that $f(x_i^{r+1})=y_i^{r+1}$ for all $i\in\mathcal{J}$.

If $x_{m-2}^{r+1}\in V(\mathcal{B}^1_*)$, then set $f(x_{m-2}^{r+1})=y_{m-2}^{r+1}$. If $x_{m-2}^{r+1}\notin V(\mathcal{B}^1_*)$, then find the closest vertex $v_1$ of $\mathcal{B}^1_*$ to $x_{m-2}^{r+1}$ in $\mathcal{B}^1_{r+1}$ and let $v_2=f(v_1)$. Find a neighbor $v\notin V(\mathcal{B}^2_*)$ of $v_2$ in $\mathcal{B}^2_j$ such that there is no descendant of $v$ equal to $y_i^{r+1}$, $i\in[r]$. The desired vertex $y_{m-2}^{r+1}$ is the final vertex of a path $v_2 v\ldots y_{m-2}^{r+1}$ in $\mathcal{B}^2_j$ with length $d_{\mathcal{B}^1_j}(v_1,x_{m-2}^{r+1})$.

\end{enumerate}

\section{Discussions}
\label{conj}

In our paper, we prove $\mathrm{FO}^{m-2}$ convergence law, i.e. we consider only sentences with at most $m-2$ variables. This assumption allows us to significantly simplify the analysis of the local properties of graphs that we study. In particular, for any vertex of $G_{n,m}$ and $a\in\mathbb{N}$, if its $a$-neighborhood
is a tree, then it is $(m-1)$-trivial. The same is true for unicyclic neighborhoods: if $a$-neighborhood of a cycle does not contain any other cycles, then it is $(m-2)$-trivial. 

Lemma~\ref{lem:prob_main} provides that for each $\ell,a$ and $K\in\mathbb{N}$ with high probability there are at least $K$ copies of $C_{\ell}$ having $m$-trivial $a$-neighborhoods (in fact, from the proof it follows that the number of such cycles is asymptotically $c_{l}\ln n$). Without the limitation on the number of variables, we have to consider classes of $\tilde m$-isomorphism of unicyclic graphs for $\tilde m>m-2$. The arguments for tree-neighborhoods still work in this case since: in each $\tilde m$-isomorphism class (for each $\tilde m$ there is a finite number of equivalence classes) there are $\Omega(n)$ tree-neighborhoods with high probability. Moreover, we can still conclude that each graph with a bounded size that contains more than one cycle, with high probability, disappears (as a subgraph) after some moment. However, we can not apply any modification of Lemma~\ref{claim_m-game} since it is very hard to analyze distributions of cardinalities of $\tilde m$-isomorphism classes of unicyclic $a$-neighborhoods. We suggest that some classes contain $\Theta(\ln n)$ graphs, while others contain a finite number of graphs (and the latter cardinalities converge in distribution). The main difficulty of such analysis is that, once a cycle is formed, its $a$-neighborhood may switch between $\tilde m$-isomorphism classes infinitely many times. However, we conjecture that the convergence law holds for all FO sentences.\\

It is also interesting to consider other recursive models in the context of FO limit laws. In particular, our methods can be applied to models that avoid the above mentioned difficulties. For example, one could put a restriction on the maximum degree of the graph (in a similar way to \cite{RW92}) and choose candidates for new attachments only among vertices with degree less than $d$ (for some fixed parameter $d>2m$). In such a model, for every $\tilde m$, cardinalities of classes of $\tilde m$-isomorphism would be finite.

There are recursive random graph models that contain much more dense small subgraphs (for example, graphs with edge-steps (see \cite{ARS19,ARS20})). An approach to prove or disprove logical limit laws for such models should be modified significantly. It is also natural to consider preferential attachment (see, e.g., \cite{MZ20}) models. In preferential attachment random graphs, the probability to form a cycle on vertices with high degrees would increase significantly. In particular, the probability to form a new cycle near an old one would be higher.

\section*{Acknowledgements.}

Maksim Zhukovskii is supported by the Ministry of Science and Higher Education of the Russian Federation in the framework of MegaGrant no 075-15-2019-1926. The part of the study made by Y.A. Malyshkin was funded by RFBR, project number 19-31-60021.

\end{document}